\newcommand{\comment}[1]{%
  \text{\phantom{(#1)}} \tag{#1}
}
\newtheorem{theorem}{Theorem}[section]
\newtheorem{proposition}[theorem]{Proposition}
\newtheorem{lemma}[theorem]{Lemma}
\newtheorem{conjecture}[theorem]{Conjecture}
\newtheorem{claim}[theorem]{Claim}
\theoremstyle{definition}
\newtheorem{definition}[theorem]{Definition}
\newtheorem{example}[theorem]{Example}
\theoremstyle{remark}
\newtheorem{remark}[theorem]{Remark}
\newcommand{\CC}{\mathbb{C}}
\newcommand{\QQ}{\mathbb{Q}}
\newcommand{\ZZ}{\mathbb{Z}}
\newcommand{\FF}{\mathbb{F}}
\DeclareMathOperator{\rank}{rank}
\DeclareMathOperator{\diag}{Diag}
\DeclareMathOperator{\Log}{Log}
\DeclareMathOperator{\Gr}{Gr}
\newcommand{\Asterisk}{\mathop{\scalebox{2}{\raisebox{-0.2ex}{$\ast$}}}}%
\tikzstyle{vertex}=[fill=black,circle,inner sep=0pt, minimum size=4pt]
\tikzstyle{edge}=[line width=1.5pt,black]
\title{Dilworth truncations and Hadamard products of linear spaces}
\author{
Dario Antolini\thanks{Dipartimento di Matematica, Università di Trento. Email: \texttt{dario.antolini-1@unitn.it}}
\and
Sean Dewar\thanks{School of Mathematics, University of Bristol. E-mail: \texttt{sean.dewar@bristol.ac.uk}}
\and
Shin-ichi Tanigawa\thanks{
Graduate School of Information Science and Technology, 
University of Tokyo,  
Email: \texttt{tanigawa@mist.i.u-tokyo.ac.jp}}}
\begin{document}
\date{}
\maketitle

\begin{abstract}
As a direct application of Dilworth truncations of polymatroids, we give short proofs of two theorems: Bernstein's characterisation of algebraic matroids coming from the Hadamard product of two linear spaces, and a formula for the dimension of the amoeba of a complex linear space by Draisma, Eggleston, Pendavingh, Rau, and Yuen.
We disprove Bernstein's conjecture on a characterisation of the algebraic matroids of Hadamard products of more than two linear spaces, by giving explicit counterexamples. 
\end{abstract}

{\small \noindent \textbf{MSC2020:} 05B35, 52C25, 14M99, 14N10}

{\small \noindent \textbf{Keywords:} Hadamard products, Dilworth truncations, algebraic matroids, amoebas}

\section{Introduction}

Let $\mathbb{F}$ be a field and consider the vector space $\mathbb{F}^m$ on which we fix a basis. The \emph{Hadamard product} of any $d$ vectors in $\mathbb{F}^m$ is the vector formed by their coordinate-wise multiplication: namely,
if $x_1,\ldots,x_d \in \mathbb{F}^m$ with $x_i=(x_i(j))_{j \in [m]}$ for each $i \in [d]$, their Hadamard product is given by
\begin{equation*}
    x_1 \ast x_2 \ast \cdots \ast x_d := \Big( x_1(j) \ x_2(j) \ \cdots \ x_d(j) \Big)_{j \in [m]}.
\end{equation*}
Given algebraic varieties $X_1,\ldots,X_d \subset \mathbb{F}^m$, their \emph{Hadamard product} is defined as
\begin{equation*}
    X_1 \ast \cdots \ast X_d := \overline{\big\{ x_1 \ast \cdots \ast x_d : x_1  \in X_1, \ldots , x_d \in X_d \big\}}
\end{equation*}
where the closure is taken with respect to the Zariski topology on $\mathbb{F}^m$.

Hadamard products of algebraic varieties were introduced by Cueto, Morton and Sturmfels \cite{CMS10:GeometryRBM} in the projective setting, while investigating Restricted Boltzmann Machines. These are algebraic statistical models corresponding to bipartite graphs which are building blocks for deep neural networks \cite{Mon16}. These models motivated the study of other algebraic statistical models related to Hadamard products of special projective varieties \cite{CTY10:Implicitization,MM17:DimensionKronecker}, as well as the study of algebro-geometric properties of Hadamard products of projective varieties. 
For a general overview on this latter direction, see the book
\cite{bocci2024hadamard}.

In this paper, we develop a matroidal approach to analyze the combinatorial aspect of Hadamard products of linear spaces via graph rigidity theory. This was initiated by Bernstein \cite{bernstein22}, who observed that, for any integer $n\geq 3$, the Cayley-Menger variety of $n$ points in dimension 2 is the Hadamard square of the linear space obtained as the image of the incidence matrix of an (arbitrary oriented) complete graph $K_n$. Since this matrix represents the graphic matroid of $K_n$, the 2-dimensional rigidity matroid on $K_n$ is understood as the algebraic matroid of an Hadamard square of a linear space. 

Motivated towards a new proof of Geiringer--Laman's theorem \cite{pollaczek-geiringer1927,Laman1970}, which is one of the representative results in graph rigidity theory, Bernstein gave the following characterisation of the algebraic matroid of the Hadamard product of two (complex) linear spaces.

\begin{theorem}[Bernstein \cite{bernstein22}]\label{thm:bernstein}
\label{thm:two_factors}
    Let $L_1, L_2 \subset \CC^m$ be linear spaces not contained in any coordinate hyperplane and let $r_1, r_2$ be the rank functions of the two corresponding linear matroids. Then, the algebraic matroid of the Hadamard product $L_1 * L_2$ is the matroid induced by the monotone submodular function $r_1 + r_2 - 1$.
\end{theorem}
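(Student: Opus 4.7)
The plan is to compute the tangent space to $L_1 \ast L_2$ at a generic smooth point, derive the upper bound on $r_{\mathrm{alg}}$ by an elementary dimension count, and then obtain the matching lower bound via the Dilworth truncation theorem for polymatroids.

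For the tangent space, I parametrise $L_1 \ast L_2$ by the multiplication map $\phi \colon L_1 \oplus L_2 \to \CC^m$, $(v_1,v_2) \mapsto v_1 \ast v_2$, whose image is Zariski-dense. Its differential $d\phi_{(x_1,x_2)}(w_1,w_2) = x_2 \ast w_1 + x_1 \ast w_2$ identifies the tangent space to $L_1 \ast L_2$ at a generic $x_1 \ast x_2$ as
$$T \;:=\; x_2 \ast L_1 \;+\; x_1 \ast L_2 \;\subseteq\; \CC^m,$$
so $r_{\mathrm{alg}}(S) = \dim \pi_S(T)$ for generic $(x_1,x_2)$, where $\pi_S$ denotes projection onto the coordinates in $S$.

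For the upper bound, for any nonempty $S$ the subspaces $\pi_S(x_2 \ast L_1)$ and $\pi_S(x_1 \ast L_2)$ of $\CC^S$ have dimensions $r_1(S)$ and $r_2(S)$ respectively, because coordinate-wise multiplication by a vector with no zero entries (which holds for generic $x_i$ since $L_i$ lies in no coordinate hyperplane) is an isomorphism. Their intersection contains the nonzero vector $\pi_S(x_1 \ast x_2)$, forcing $\dim \pi_S(T) \leq r_1(S) + r_2(S) - 1$. Applying subadditivity of the matroid rank across any partition $\pi$ of $S$ then yields
$$r_{\mathrm{alg}}(S) \;\leq\; \sum_{B \in \pi} r_{\mathrm{alg}}(B) \;\leq\; \sum_{B \in \pi}\bigl(r_1(B) + r_2(B) - 1\bigr),$$
and minimising over partitions bounds $r_{\mathrm{alg}}$ from above by the rank function of the matroid induced by $r_1 + r_2 - 1$.

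For the matching lower bound I would invoke the Dilworth truncation machinery developed in the earlier sections of the paper. The matroid induced by $r_1 + r_2 - 1$ is precisely the Dilworth truncation of the polymatroid $r_1 + r_2$, whose rank on $S$ equals $\min_\pi \sum_{B \in \pi}(r_1(B) + r_2(B) - 1)$; the task is to show that this minimum is actually attained by $\dim \pi_S(T)$ for generic $(x_1,x_2)$. The main obstacle lies here: one must verify that the only obstructions to independence among vectors of the form $x_2 \ast v_1 + x_1 \ast v_2$ restricted to $S$ are the one-dimensional $\CC^*$-rescaling relations forced within each block of an optimal partition. I expect to handle this as a black-box application of the Dilworth truncation theorem for sums of linear-space polymatroids under a generic diagonal twist, rather than re-deriving it inside the proof of Theorem~\ref{thm:bernstein}.
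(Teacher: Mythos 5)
Your tangent-space computation and the upper bound are correct and match the paper's \Cref{prop:tangent} and \Cref{lem:improvedmain}: for generic $x_i$ with no zero coordinates, $\pi_S(x_2\ast L_1)$ and $\pi_S(x_1\ast L_2)$ have dimensions $r_1(S)$, $r_2(S)$ and share the nonzero vector $\pi_S(x_1\ast x_2)$, and subadditivity over partitions gives the bound by the induced matroid's rank. The problem is the lower bound, which is the whole content of the theorem, and you have left it to ``a black-box application of the Dilworth truncation theorem for sums of linear-space polymatroids under a generic diagonal twist.'' No such black box exists in the paper: \Cref{thm:dil1} is about intersecting each subspace $A_e$ with a single \emph{generic hyperplane} $H$, not about twisting a direct-sum representation by diagonal factors. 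Worse, the literal statement you appeal to is false: if you twist the rows $(y_{1,e},y_{2,e})\in\CC^{n_1}\times\CC^{n_2}$ by \emph{independent} generic scalars per row, the resulting row matroid is the matroid union $\mathcal{M}(L_1)\vee\mathcal{M}(L_2)$ (rank $\min\{|F_0|+r_1+r_2\}$, with no $-1$), not the Dilworth truncation; already $L_1=L_2=\CC(1,1)\subset\CC^2$ shows the difference (Hadamard product has rank $1$, an uncorrelated twist gives rank $2$). So the correlation between the twist scalars and the chosen generic point is essential and cannot be abstracted away.

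The missing idea, which is the crux of the paper's proof, is the identification of the twisted rows as a generic hyperplane section in the parameter space. Write $x_i=Y_ip_i$ for generic $p_i\in\CC^{n_i}$, set $A_e=\langle y_{1,e}\rangle\oplus\langle y_{2,e}\rangle\subset\CC^{n_1}\times\CC^{n_2}$ (so $\dim\langle A_e:e\in F\rangle=r_1(F)+r_2(F)$), and let $H$ be the hyperplane with normal vector $(p_1,-p_2)$. A short computation shows $A_e\cap H=\langle (x_2(e)\,y_{1,e},\,x_1(e)\,y_{2,e})\rangle$, which is exactly the $e$-th row of the Jacobian $(\diag(x_2)Y_1\mid\diag(x_1)Y_2)$ representing $\mathcal{M}(L_1\ast L_2)$; the same generic $p_1,p_2$ thus serve both to pick the generic point $x_1\ast x_2$ and to make $H$ generic with respect to $\{A_e\}$. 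Only after this identification does \Cref{thm:dil1} (with $k=1$) apply and yield equality with $\min\{\sum_i(r_1(F_i)+r_2(F_i)-1)\}$ over partitions, closing the gap between your upper bound and the claimed rank. Without this step your proposal proves only one inequality.
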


\Cref{thm:bernstein} implies that the dimension of coordinate projections of Hadamard products of two linear spaces can be computed in deterministic polynomial time, assuming the rank oracles $r_1$ and $r_2$.
Bernstein's proof uses tropical geometry, which allows to transform the statement in a result about Minkowski sums of the Bergman fans of the two linear spaces. The same approach was adopted before in the computation of the number of realisations of a Laman graph \cite{capco}. The natural generalisation of this characterisation is the following conjecture by Bernstein.

\begin{conjecture}[Bernstein \cite{bernstein22}]\label{con:bernstein}
    Let $L_1, \ldots, L_d \subset \CC^m$ be linear spaces not contained in any coordinate hyperplane and let $r_1, \ldots, r_d$ be the rank functions of the corresponding linear matroids. Then, the algebraic matroid of the Hadamard product $L_1 * \cdots * L_d$ is the matroid induced by the monotone submodular function $r_1 + \cdots + r_d - (d-1)$.
\end{conjecture}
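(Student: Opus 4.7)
The plan is to proceed by induction on $d$, taking \Cref{thm:bernstein} as the base case. For the inductive step, I would write $L_1 \ast \cdots \ast L_d = Y \ast L_d$ with $Y := L_1 \ast \cdots \ast L_{d-1}$; by the inductive hypothesis, the algebraic matroid of $Y$ is the matroid induced by the submodular function $g := r_1 + \cdots + r_{d-1} - (d-2)$. Since $r_1 + \cdots + r_d - (d-1) = g + r_d - 1$, the conjecture reduces to a relative version of \Cref{thm:bernstein}: if $X \subset \CC^m$ is an irreducible variety whose algebraic matroid is induced by a monotone submodular function $g$, and $L \subset \CC^m$ is a linear space with rank function $r$, both not contained in any coordinate hyperplane, then the algebraic matroid of $X \ast L$ should be induced by $g + r - 1$. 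This would fit naturally into the Dilworth-truncation framework of the paper, since the passage from $g + r$ to the matroid induced by $g + r - 1$ is precisely a Dilworth truncation.

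To attempt such a relative statement, I would reach for tropical geometry, following Bernstein's original route. One has $\mathrm{trop}(X \ast L) = \mathrm{trop}(X) + \mathrm{trop}(L)$ as a Minkowski sum, and the dimension of any coordinate projection of $X \ast L$ equals that of the corresponding projected Minkowski sum. Bernstein's two-factor argument leverages the fact that the Bergman fans of $L_1$ and $L_2$ share only the lineality spanned by the all-ones vector $\mathbf{1}$, which accounts for the $-1$ in the formula. The inductive step would then require an analogous ``only-$\mathbf{1}$'' statement for the tropicalisation of $L_1 \ast \cdots \ast L_{d-1}$ and the Bergman fan of $L_d$, even after restricting to an arbitrary coordinate subset $S \subseteq [m]$.

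The main obstacle, which I expect to be fatal in general, is precisely the verification of this ``only-$\mathbf{1}$'' condition for $d \ge 3$. For two Bergman fans, the flats of $M_1$ and $M_2$ cannot align to produce common lineality beyond $\mathbf{1}$, but for three or more factors it is plausible that the lattices of flats can be configured so that, after projection to some $S$, the Minkowski sum of the projected Bergman fans acquires common lineality of dimension strictly greater than one. Whenever this happens, $\dim \pi_S(L_1 \ast \cdots \ast L_d)$ is strictly smaller than $\sum_i r_i(S) - (d-1)$, so $S$ is dependent in the algebraic matroid while independent in the matroid induced by the formula, and the conjecture fails. A careful analysis of how lattices of flats of three or more matroids can interact, particularly through common coatoms or shared minimal non-trivial flats, would therefore be the technical heart of any genuine proof and, by the same token, the natural place to hunt for explicit counterexamples.
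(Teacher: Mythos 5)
You cannot prove this statement, because it is false: the paper's Section~\ref{sec:counterexamples} disproves \Cref{con:bernstein} with explicit counterexamples, so any strategy aiming at a proof by induction on $d$ has a gap that no amount of tropical bookkeeping can close. Your own analysis in fact points in the right direction --- you correctly suspect that the inductive step fails for $d\geq 3$ and that the failure locus is where counterexamples live --- but a blind ``proof proposal'' that ends by predicting the statement is false, without exhibiting a counterexample, settles nothing. The paper's first counterexample is purely combinatorial: take $L_1=L_2$ a representation of the graphic matroid and $L_3$ a representation of the bicircular matroid of the graph in \Cref{fig:34eg} (a $(3,4)$-tight graph with no spanning $(2,3)$-tight subgraph). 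By \Cref{lem:improvedmain} and \Cref{thm:bernstein}, $\mathcal{M}(L_1\ast L_2\ast L_3)\preceq \mathcal{M}_{c_{2,3}^{\rm MD}+c_{1,0}^{\rm MD}-1}$, whose rank is at most $3|V|-5$, whereas the conjectured matroid $\mathcal{M}_{c_{3,4}}$ has rank $3|V|-4$; the mechanism is that the Dilworth truncation does not commute with summing the rank functions, i.e.\ $\mathcal{M}_{(r_1+r_2-1)^{\rm MD}+r_3-1}$ can sit strictly below $\mathcal{M}_{r_1+r_2+r_3-2}$ in the weak order.

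Moreover, the intermediate ``relative'' statement you propose as the engine of the induction --- that if $\mathcal{M}(X)$ is induced by a monotone submodular $g$ and $L$ is a linear space with rank function $r$, then $\mathcal{M}(X\ast L)$ is induced by $g+r-1$ --- is itself false, so even the sharper iterated bound cannot be an equality. The paper's second counterexample (\Cref{subsec:2ndcounter}) takes $L_1,L_2,L_3$ to be column spaces of incidence matrices of the three partitions coming from a $3$-partite $3$-uniform hypergraph on $3+3+3$ vertices with the six ``rainbow'' edges; the product is a projection of a Segre variety whose algebraic matroid is represented by the edge--vertex incidence matrix $I_G$, and the signed permutation vector $(1,-1,-1,1,1,-1)$ lies in the left kernel of $I_G$, forcing $\operatorname{rank}\mathcal{M}(L_1\ast L_2\ast L_3)\leq 5$, while $E$ is independent in $\mathcal{M}_{(r_{\sigma(1)}+r_{\sigma(2)}-1)^{\rm MD}+r_{\sigma(3)}-1}$ for every ordering $\sigma$. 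So the defect is not merely one of truncation order, and your hoped-for inductive step collapses at $d=3$. (Your tropical intuition about extra common lineality is a reasonable heuristic for why, but the paper's route to the refutation is the polymatroid/Dilworth framework of \Cref{lem:improvedmain}, not Bergman fans; the only positive statement salvaged is \Cref{thm:generichadamard}, namely that the conjectured formula does hold for generic linear spaces.)
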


\subsection{Our contribution and structure of the paper}

We revisit Lov{\'a}sz and Yemini's approach \cite{lovasz82} to Dilworth truncations of polymatroids in order to give a new proof of Bernstein's theorem for the case of two linear spaces (\Cref{sec:hadamard}).
We also show that the same approach leads to a new proof of a result by Draisma, Eggleston, Pendavingh, Rau, and Yuen \cite{amoeba} on the dimension of the amoeba of a complex linear space. The key step is to reduce the computation of the (real) dimension of the amoeba to the computation of the (complex) dimension of an Hadamard product of two linear spaces (\Cref{sec:amoeba}).
On the other hand, the technique fails for the Hadamard product of three or more linear spaces. In fact, we provide two counterexamples to Bernstein's conjecture (\Cref{sec:counterexamples}).
However, we verify that the conjecture does hold for a generic choice of linear spaces (\Cref{sec:generic}).
While this result for generic linear spaces does follow from a special case of a result of Ballico \cite{BALLICO},
we present an elementary and simple alternative proof.
Moreover, for any choice of $n_1,\ldots,n_d$, we provide explicit examples of linear spaces of dimension $n_i$ such that their Hadamard product attains the maximum dimension $n_1 + \cdots + n_d + (d-1)$.

\section{Dilworth truncations}

\subsection{Combinatorial description}

It was Lov{\'a}sz and Yemini~\cite{lovasz82} who first pointed out the connection between the Geiringer–Laman theorem for rigidity and Dilworth truncations. Since then, this approach has been successfully applied to various rigidity problems, including the rigidity of symmetric or periodic frameworks~\cite{tangiawa}, point-line frameworks~\cite{jackson}, and scene analysis~\cite{whiteley}. In this section we review basic facts on polymatroids related to Dilworth truncations. 

\begin{definition}
    A \emph{polymatroid} is a pair $(E,r)$, where $E$ is a finite set and $r\colon 2^E \to \mathbb{Z}$ is a function with the following properties:
    \begin{itemize}
        \item $r$ is \emph{submodular}, i.e.\ $r(X) + r(Y) \geq r(X \cup Y) + r(X \cap Y)$ for all $X, Y \subset E$;
        \item $r$ is \emph{monotone}, i.e.\ $r(X) \leq r(Y)$ whenever $X \subset Y \subset E$;
        \item $r(\emptyset) = 0$.
    \end{itemize}
\end{definition}

\begin{example}
\label{example:matroid}
    If $(E,r)$ is a polymatroid satisfying $r(\{e\})\leq 1$ for all $e \in E$, then $(E,r)$ is a matroid defined by the rank function $r$.
\end{example}

\begin{example}
\label{example:linearpolymatroid}
    Let $E$ be a finite set, $\mathbb{F}$ a field and consider a collection ${\cal A} = \{ A_e \subset \mathbb{F}^m : e \in E\}$ of linear subspaces. For any $F \subset E$, define ${\cal A}_F = \langle A_e : e \in F\rangle$ and $r(F) = \dim {\cal A}_F$. Then, $(E,r)$ is a polymatroid.
\end{example}

\begin{definition}
    A \emph{linear polymatroid} over a field $\FF$ is any polymatroid arising as in \Cref{example:linearpolymatroid}.
\end{definition}

As mentioned in \Cref{example:matroid}, any matroid is a polymatroid.
Conversely, a polymatroid (or, equivalently, any integer-valued monotone submodular function) induces a matroid, see, e.g.~\cite[Proposition 11.1.7]{oxley}.

\begin{proposition}[\cite{edmonds1966submodular}]
\label{prop:induced}
For a polymatroid $(E,f)$, the collection
\[
{\cal I}_f:=\{F\subseteq E: |I|\leq f(I) \text{ for all $I\subseteq F$}\}
\]
forms a matroid on $E$, whose rank function is
\begin{equation}\label{eq:rank}
r_f(F)=\min\{|F\setminus I|+f(I): I\subseteq F\} \qquad (F\subseteq E).
\end{equation}
\end{proposition}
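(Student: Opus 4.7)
The plan is to exhibit the claimed rank function directly, verify the matroid rank axioms for it, and then check that its independent sets coincide with $\mathcal{I}_f$. Concretely, define $\tilde r(F) := \min\{|F\setminus I| + f(I) : I\subseteq F\}$ for every $F\subseteq E$; the goal is to show that $\tilde r$ is the rank function of a matroid on $E$ whose independent sets are exactly $\mathcal{I}_f$.

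The routine checks come first: $\tilde r$ is integer-valued with $\tilde r(\emptyset)=0$ and $\tilde r(\{e\})\leq 1$ (both witnessed by the choice $I=\emptyset$). For monotonicity, I add a single element $e$ at a time and let $I'\subseteq F\cup\{e\}$ be optimal for $\tilde r(F\cup\{e\})$: if $e\notin I'$ then $I'\subseteq F$ and $\tilde r(F)\leq |F\setminus I'|+f(I') = \tilde r(F\cup\{e\})-1$; if $e\in I'$ then $I'\setminus\{e\}\subseteq F$, and monotonicity of $f$ gives $\tilde r(F)\leq |F\setminus I'|+f(I'\setminus\{e\})\leq \tilde r(F\cup\{e\})$.

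The substantive step is submodularity of $\tilde r$. For $X,Y\subseteq E$, pick optimal $I_X\subseteq X$ and $I_Y\subseteq Y$ witnessing $\tilde r(X)$ and $\tilde r(Y)$, and test $\tilde r(X\cup Y)$ against $I_X\cup I_Y$ and $\tilde r(X\cap Y)$ against $I_X\cap I_Y$. A short count using $|A\cup B|+|A\cap B|=|A|+|B|$ yields the set-theoretic identity $|(X\cup Y)\setminus(I_X\cup I_Y)| + |(X\cap Y)\setminus(I_X\cap I_Y)| = |X\setminus I_X|+|Y\setminus I_Y|$, which combined with submodularity of $f$ applied to $(I_X,I_Y)$ gives $\tilde r(X\cup Y)+\tilde r(X\cap Y)\leq\tilde r(X)+\tilde r(Y)$. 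Together with $\tilde r(\{e\})\leq 1$, this identifies $\tilde r$ as the rank function of a matroid on $E$. Finally, $F$ is independent (i.e.\ $\tilde r(F)=|F|$) if and only if $|I|\leq f(I)$ for every $I\subseteq F$, which is exactly $F\in\mathcal{I}_f$. The main obstacle is the submodularity step; specifically, recognising that $I_X\cup I_Y$ and $I_X\cap I_Y$ are the correct test subsets is what allows the submodularity of $f$ to translate, via the exact cancellation above, into submodularity of $\tilde r$.
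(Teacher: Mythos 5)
Your proof is correct. Note that the paper does not prove this statement at all: it is quoted as a classical result of Edmonds, with a pointer to \cite[Proposition~11.1.7]{oxley}, so there is no in-paper argument to compare against. Your route --- defining the candidate rank function $\tilde r(F)=\min\{|F\setminus I|+f(I): I\subseteq F\}$ and verifying the rank axioms directly --- is a clean, self-contained version of the standard proof; the textbook alternative is to verify the independence axioms for ${\cal I}_f$ (in particular the augmentation axiom, which requires a small submodular uncrossing argument) and only afterwards derive the rank formula, so your approach has the advantage of getting formula (\ref{eq:rank}) for free once the axioms are checked. All the individual steps check out: the single-element monotonicity argument is fine (in the case $e\in I'$ one uses $|F\setminus(I'\setminus\{e\})|=|F\setminus I'|$ together with monotonicity of $f$), the counting identity $|(X\cup Y)\setminus(I_X\cup I_Y)|+|(X\cap Y)\setminus(I_X\cap I_Y)|=|X\setminus I_X|+|Y\setminus I_Y|$ follows from two applications of inclusion--exclusion since $I_X\cup I_Y\subseteq X\cup Y$ and $I_X\cap I_Y\subseteq X\cap Y$, and the identification of the independent sets with ${\cal I}_f$ is immediate from $\tilde r(F)\leq |F|$. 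One point you leave implicit and should state: the rank axioms require $\tilde r(F)\leq |F|$ for \emph{all} $F$, not just singletons; this follows from subadditivity (submodularity plus $\tilde r(\emptyset)=0$) combined with $\tilde r(\{e\})\leq 1$, or directly from the choice $I=\emptyset$ in the minimum --- the latter is the one-line fix. Non-negativity of $\tilde r$ is likewise immediate since $f\geq 0$ by monotonicity and $f(\emptyset)=0$.
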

The matroid $(E,{\cal I}_f)$ given in \Cref{prop:induced} is called the \emph{matroid induced by $f$}, denoted by ${\cal M}_f$.

Recall that a polymatroid $(E,f)$ satisfies $f(\emptyset)=0$ by definition.
Dilworth truncations can be used to extend 
\Cref{prop:induced} to the case when $f(\emptyset)$ is arbitrary.
We first introduce Dilworth truncations in the combinatorial setting.
Our terminology follows Schrijver~\cite[Section 48]{schrijver2003combinatorial}.
\begin{definition}
The \emph{Dilworth truncation} of  a set function $f\colon2^E\rightarrow \mathbb{R}$ is the function $f^{\rm D}\colon2^E\rightarrow \mathbb{R}$ defined as
\begin{equation}\label{eq:dilworth}
f^{\rm D}(F)=
\begin{cases} 
\min \left\{ \sum_{i=1}^t f(F_i) : \{F_1,\ldots,F_t\} \text{ is a partition of } F \right\} & (\text{if } F \neq \emptyset),\\ 
0 & (\text{if } F=\emptyset),
\end{cases}
\end{equation}
where (and hereafter) a partition of $F$ means a collection of \emph{nonempty} disjoint subsets of $F$
whose union is $F$.
\end{definition}

The following theorem can be pieced together from results of Edmonds~\cite{edmonds1970submodular} which were later discovered independently by Dunstan~\cite{dunstan1976matroids}.
Since the statement given below is an adapted version for our purposes, we briefly outline how it can be verified for the sake of completeness.

\begin{theorem}
\label{thm:combinatorial_dilworth}
    Let $E$ be a finite set and let $f \colon 2^E \rightarrow \mathbb{Z}$ be a set function.
    If $f$ is monotone, submodular and non-negative on $E \setminus \{\emptyset\}$, then $f^{\rm D}$ is monotone submodular, and hence $(E,f^{\rm D})$ is a polymatroid.
    Moreover, the matroid ${\cal M}_{f^{\rm D}}=(E,{\cal I}_{f^{\rm D}})$ induced by $f^{\rm D}$ satisfies
    \begin{equation}\label{eq:independent_sets_dilworth}
        {\cal I}_{f^{\rm D}}=\{F\subseteq E: |I|\leq f(I) \text{ for all $I\subseteq F$ with $I\neq \emptyset$}\},
    \end{equation}
    and its rank function is given by
    the combination of (\ref{eq:rank}) and (\ref{eq:dilworth}) for $f$, i.e.,
    \begin{align} \nonumber
        r_{f^{\rm D}}(F)=f^{\rm MD}(F):&=\min\left\{|F_0|+f^{\rm D}(F\setminus F_0): F_0\subseteq F \right\} \\
        &=\min\left\{|F_0|+\sum_{i=1}^t f(F_i): F_0\subseteq F, \{F_1, \dots, F_t\} \text{ is a partition of } F\setminus F_0\right\} \label{eq:dilworth_matroid}
    \end{align}
    for any nonempty $F\subseteq E$. 
\end{theorem}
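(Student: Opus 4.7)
The plan is to establish the three components of the theorem in order: (i) $f^{\rm D}$ is monotone and submodular (so $(E, f^{\rm D})$ is a polymatroid, since $f^{\rm D}(\emptyset) = 0$ holds by definition); (ii) the independent sets of $\mathcal{M}_{f^{\rm D}}$ are described by (\ref{eq:independent_sets_dilworth}); and (iii) its rank function is given by (\ref{eq:dilworth_matroid}).

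For (i), monotonicity is a short restriction argument: if $F \subseteq F'$ and $\{F'_1, \ldots, F'_t\}$ is an optimal partition of $F'$, then the nonempty members of $\{F'_i \cap F\}_i$ form a partition of $F$ whose $f$-sum is at most $\sum_i f(F'_i) = f^{\rm D}(F')$, by monotonicity of $f$ on the surviving blocks and non-negativity of $f$ on any discarded nonempty block. Submodularity of $f^{\rm D}$ is the main technical step. Given $X, Y \subseteq E$ with optimal partitions $\mathcal{P}_X, \mathcal{P}_Y$, I would analyze the bipartite intersection graph $G$ on $\mathcal{P}_X \sqcup \mathcal{P}_Y$ whose edges are the pairs $\{A, B\}$ with $A \cap B \neq \emptyset$. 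Iterative applications of submodularity of $f$ along spanning trees of the connected components of $G$ produce an upper bound on $f^{\rm D}(X \cup Y)$ via the partition by component unions; combining this with the partition of $X \cap Y$ into the nonempty $A \cap B$, and carefully accounting for non-tree contributions using non-negativity of $f$ on nonempty sets, yields the submodular inequality $f^{\rm D}(X) + f^{\rm D}(Y) \geq f^{\rm D}(X \cup Y) + f^{\rm D}(X \cap Y)$. This uncrossing/spanning-tree argument is classical (cf.\ \cite[Section 48]{schrijver2003combinatorial}).

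For (ii), Proposition \ref{prop:induced} applied to $(E, f^{\rm D})$ gives $F \in \mathcal{I}_{f^{\rm D}}$ iff $|I| \leq f^{\rm D}(I)$ for every $I \subseteq F$. The trivial partition $\{I\}$ shows $f^{\rm D}(I) \leq f(I)$ for nonempty $I$, giving one direction of (\ref{eq:independent_sets_dilworth}). Conversely, if $|I| \leq f(I)$ for all nonempty $I \subseteq F$, then for any partition $\{I_1, \ldots, I_s\}$ of a nonempty $I \subseteq F$ we have $|I| = \sum_j |I_j| \leq \sum_j f(I_j)$, and hence $|I| \leq f^{\rm D}(I)$.

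For (iii), the rank formula (\ref{eq:rank}) applied to $f^{\rm D}$, with the change of variable $F_0 := F \setminus I$, gives $r_{f^{\rm D}}(F) = \min\{|F_0| + f^{\rm D}(F \setminus F_0) : F_0 \subseteq F\}$; unfolding the definition (\ref{eq:dilworth}) of $f^{\rm D}(F \setminus F_0)$ yields (\ref{eq:dilworth_matroid}). The main obstacle throughout is step (i)'s submodularity of $f^{\rm D}$: the spanning-tree/uncrossing argument requires delicate bookkeeping for cycles in the bipartite intersection graph, which is the subtle classical ingredient; everything else is either routine or a direct substitution into Proposition \ref{prop:induced}.
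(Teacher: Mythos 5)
Your treatment of monotonicity, of the independent-set description (\ref{eq:independent_sets_dilworth}), and of the rank formula (\ref{eq:dilworth_matroid}) is correct and essentially identical to the paper's own argument: one inclusion follows from $f^{\rm D}(I)\leq f(I)$ for nonempty $I$, the reverse by summing $|I_j|\leq f(I_j)$ over a partition attaining $f^{\rm D}(I)$, and (\ref{eq:dilworth_matroid}) is just \Cref{prop:induced} combined with the definition (\ref{eq:dilworth}). The paper, however, does not reprove submodularity of $f^{\rm D}$: it cites \cite[Theorem 48.2]{schrijver2003combinatorial} outright, and that is exactly the step where your sketch has a genuine gap.

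The spanning-tree argument on the bipartite intersection graph only yields, for each component $C$, the bound $\sum_{\text{blocks of }C} f \geq f(U_C)+\sum_{\text{tree edges}} f(A\cap B)$ (attach one block at a time, using monotonicity to shrink the intersection term). But the partition of $X\cap Y$ your construction is committed to is the common refinement $\{A\cap B\neq\emptyset : A\in\mathcal{P}_X,\ B\in\mathcal{P}_Y\}$, which contributes one term for \emph{every} edge of the intersection graph, including the non-tree edges; those surplus terms $f(A\cap B)\geq 0$ sit on the side of the inequality you must dominate, so non-negativity of $f$ works against you rather than for you, and your sketch gives no mechanism to absorb them. Indeed the inequality you are aiming at is false if the partitions are not optimal: take $f(S)=\min\{|S|,1\}$ on $E=\{1,2,3,4\}$ with $\mathcal{P}_X=\{\{1,2\},\{3,4\}\}$ and $\mathcal{P}_Y=\{\{1,3\},\{2,4\}\}$; then the left side is $4$ while your proposed right side is $f(E)+4\cdot 1=5$. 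So a proof along your lines would have to use the optimality of $\mathcal{P}_X,\mathcal{P}_Y$ in an essential way, which your sketch never does. The classical argument you allude to is different: one takes the list of all blocks of $\mathcal{P}_X$ and $\mathcal{P}_Y$ together and repeatedly replaces two properly intersecting members by their union and intersection (submodularity keeps the total from increasing, and the potential $\sum_P|P|^2$ guarantees termination); the resulting laminar family has its maximal members partitioning $X\cup Y$ and its remaining members partitioning $X\cap Y$ --- partitions that are in general \emph{coarser} than the common refinement, which is precisely what avoids your cycle problem. If your intention is simply to cite \cite[Theorem 48.2]{schrijver2003combinatorial} for this step, as the paper does, the rest of your proof stands; as a self-contained argument it does not.
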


\begin{proof}
    That $f^D$ is submodular is proved in \cite[Theorem 48.2]{schrijver2003combinatorial}.
    It is immediate from $f$ being monotone that $f^D$ must also be monotone.
    Since $f^D(\emptyset) = 0$ by definition, $(E,f^{\rm D})$ is a polymatroid.

    We now prove equality (\ref{eq:independent_sets_dilworth}).
    Since $(E,f^D)$ is a polymatroid, we have matroid ${\cal M}_{f^D}$ whose independent set family ${\cal I}_{f^D}$ is given by 
    \[
    {\cal I}_{f^D}=\{F\subseteq E: |I|\leq f^D(I) \text{ for all $I\subseteq F$}\}.
    \]
    Let 
    \[
    {\cal I}'=\{F\subseteq E: |I|\leq f(I) \text{ for all $I\subseteq F$ with $I\neq \emptyset$}\}.
    \]
    Since $f^D(I)\leq f(I)$ for any nonempty $I$, we have
    ${\cal I}_{f^D}\subseteq {\cal I}'$.
    To see the reverse inclusion,
    pick any nonempty $F\in {\cal I}'$.
    Then, any subset of $F$ is included in ${\cal I}'$ by the definition of ${\cal I}'$.
    Consider a nonempty set $I\subseteq F$ and let $\{I_1,\dots, I_k\}$ be a partition of $I$ that attains $f^D(I)$.
    Then, $|I_i|\leq f(I_i)$ holds by $I_i\in {\cal I}'$, and we get $|I|=\sum_i |I_i|\leq \sum_i f(I_i)=f^D(I)$,
    implying $F\in {\cal I}_{f^D}$.

    The rank function for $\mathcal{M}_{f^D}$ follows directly from \Cref{prop:induced} and (\ref{eq:dilworth}) for $f$.
\end{proof}

Observe that if a submodular function $f$ satisfies $f(\emptyset)\geq 0$,
the submodularity implies $f^{\rm D}(F)=f(F)$ for all $F\subseteq E$ with $F\neq \emptyset$, and hence ${\cal M}_{f^{\rm D}}={\cal M}_f$.
Hence, there is no ambiguity even if we call ${\cal M}_{f^{\rm D}}$ 
the \emph{matroid induced by $f$} and denote it simply by ${\cal M}_f$
for a monotone submodular function $f$ that is non-negative on $E \setminus \{\emptyset\}$ (which may not satisfy $f(\emptyset)\geq 0$).

\begin{example}\label{ex:canonical}
Suppose $f\colon2^E\rightarrow \mathbb{Z}$ is a monotone submodular function
with $f(\emptyset)=0$
and $k$ is an integer.
The function $g:=f-k$ is monotone submodular.
As above, we denote the matroid ${\cal M}_{g^D}$ simply by ${\cal M}_g$ with a slight abuse of notation. This has independent set family:
\[
{\cal I}_{g^{\rm D}}=\{F\subseteq E: |I|\leq f(I)-k \text{ for all $I\subseteq F$ with $I\neq \emptyset$}\}.
\]
\end{example}

\subsection{Geometric description}
\label{subsection:geometric_dilworth}

A key fact due to Lov{\'a}sz is that, if the initial polymatroid ${\cal M} =(E,f)$ is linear,
then there is a canonical way to get a linear representation of ${\cal M}_{g}$. We first recall a standard description of genericity.

\begin{definition}
\label{def:generic}
    Let $V$ be an algebraic variety over $\mathbb{R}$ or $\mathbb{C}$, and let $P$ be a property for points in $V$.
    We say that $P$ \emph{holds for a generic point} if there exists a non-empty Zariski open subset $U \subset V$ such that property $P$ holds for every point in $U$.
    As an abuse of terminology, we say that any point in $V$ for which $P$ holds is itself a \emph{generic point}.
\end{definition}

The Grassmannian $\Gr_k(\mathbb{F}^n)$ is the variety of $k$-dimensional linear subspaces of $\mathbb{F}^n$.
A family of linear spaces $L_1, \ldots, L_d \subset \mathbb{F}^n$ is said to be \emph{generic} if the point $(L_1,\ldots,L_d)$ is a generic point of the variety $\Gr_{n_1}(\mathbb{F}^n) \times \cdots \times \Gr_{n_d}(\mathbb{F}^n)$.

A geometric description of Dilworth truncation is given in the following theorem.

\begin{theorem}[Lov\'asz \cite{lovasz1977flats}]\label{thm:dil1}
    Let $\mathbb{F} \in \{\mathbb{R},\mathbb{C}\}$ and let $(E,f)$ be a linear polymatroid with a linear representation $\{A_e \subset \mathbb{F}^n: e\in E\}$ for some positive integer $n$.
    Suppose that $\dim A_e\geq k$.
    Then, for any generic codimension $k$ linear subspace $H \subset \FF^n$ and any $F\subseteq E$,
    we have
    \begin{equation*}
        \dim \langle A_e \cap H: e \in F \rangle = \min \left\{ \sum_{i=1}^t (\dim \langle A_e: e\in F_i\rangle - k ): \{F_1,\ldots,F_t\} \text{ is a partition of } F \right\}.
    \end{equation*}
    Thus, $\{A_e\cap H \subset \mathbb{F}^n: e\in E\}$ is a linear representation of $(E,g^{\rm D})$ for $g=f-k$.
\end{theorem}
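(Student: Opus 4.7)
The plan is to prove the displayed equality directly and then deduce as a corollary that $\{A_e \cap H : e \in E\}$ is a linear representation of $(E, g^{\rm D})$ for $g = f - k$. Write $d(F) = \dim\langle A_e \cap H : e \in F\rangle$ and $\phi(F)$ for the right-hand side of the claimed identity; the goal is to prove $d = \phi$. The upper bound $d(F) \leq \phi(F)$ is almost immediate: for any partition $\{F_1, \ldots, F_t\}$ of $F$, the inclusion $\langle A_e \cap H : e \in F_i\rangle \subseteq \langle A_e : e \in F_i\rangle \cap H$ together with $\dim\langle A_e : e \in F_i\rangle = f(F_i) \geq k$ (using $\dim A_e \geq k$ for any $e \in F_i$) and the generic position of $H$ give $\dim(\langle A_e : e \in F_i\rangle \cap H) = f(F_i) - k$. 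Subadditivity of dimension and minimisation over partitions then yield $d(F) \leq \phi(F)$.

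For the lower bound, I would first induct on $k$ to reduce to the case $k = 1$. Writing the generic codimension-$k$ subspace as $H = H' \cap H_0$ with $H'$ a generic codimension-$(k-1)$ subspace and $H_0$ a generic hyperplane, the inductive hypothesis yields that $\{A_e \cap H' : e \in E\}$ linearly represents $(f - (k-1))^{\rm D}$ (genericity of $H'$ ensures $\dim(A_e \cap H') \geq 1$, so the $k=1$ step applies). Applying the $k = 1$ case to this new family with $H_0$ then produces a representation of $((f - (k-1))^{\rm D} - 1)^{\rm D}$. A direct combinatorial manipulation collapses this nested minimum over partitions of partitions to an ordinary minimum: for a fixed innermost partition with $s$ parts, the objective $\sum_{\mathrm{parts}} f(\mathrm{part}) - (k-1)s - t$ is minimised when the outer partition is as fine as possible ($t = s$), giving $\sum_i (f(F_i) - k)$ and hence equality with $(f - k)^{\rm D}(F)$.

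It remains to establish the $k = 1$ case, which I would prove by induction on $|F|$, the base $|F| = 1$ being immediate from $\dim(A_e \cap H) = \dim A_e - 1$. For the inductive step, if $d(F) = f(F) - 1$ then the trivial partition $\{F\}$ already realises $\phi(F)$; otherwise $d(F) < f(F) - 1$, and the plan is to produce a nontrivial partition $F = F_1 \sqcup F_2$ with $\langle A_e \cap H : e \in F_1\rangle \cap \langle A_e \cap H : e \in F_2\rangle = 0$, whereupon the inductive hypothesis on $F_1$ and $F_2$ combined with direct summation yields $d(F) = d(F_1) + d(F_2) \geq \phi(F_1) + \phi(F_2) \geq \phi(F)$. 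I expect the construction of this direct-sum partition to be the main obstacle. My approach is to take $F_1 \subsetneq F$ to be a maximal nonempty subset satisfying $d(F_1) = f(F_1) - 1$ (which exists, since every singleton qualifies), and to show, using the submodularity of both $f$ and $d$ together with the genericity of $H$, that maximality of $F_1$ forces the required trivial intersection: otherwise a nonzero vector in the intersection of the two spans could be used to enlarge $F_1$ while preserving the tightness $d = f - 1$, contradicting its maximality.
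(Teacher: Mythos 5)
Your upper bound is fine, and your reduction from general $k$ to $k=1$ via a generic flag (together with the collapsing of the nested minimum, $((f-(k-1))^{\rm D}-1)^{\rm D}=(f-k)^{\rm D}$) is a legitimate way to recover the general statement from the codimension-one case, which is also the only case the paper proves in detail. Your induction on $|F|$ in the $k=1$ case likewise mirrors the paper: the claim you need in your second case --- if $\dim\langle A_e\cap H : e\in F\rangle < f(F)-1$ then the family $\{A_e\cap H\}$ splits as a nontrivial direct sum --- is exactly the contrapositive of the paper's key lemma (\Cref{thm:connected}: a connected family drops dimension by exactly one). So the entire content sits in the step you flag as the ``main obstacle'', and there your proposal has a genuine gap.

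The maximal-tight-set argument, as sketched, cannot be completed from the ingredients you invoke. Write $B_e=A_e\cap H$ and $d(F')=\dim\langle B_e: e\in F'\rangle$. First, the natural enlargement move fails: given a nonzero $v\in\langle B_e:e\in F_1\rangle\cap\langle B_e:e\in F_2\rangle$ and a minimal $F_2'\subseteq F_2$ with $v$ in the span of $\{B_e:e\in F_2'\}$, one only gets $d(F_1\cup F_2')\le d(F_1)+d(F_2')-1\le f(F_1)+f(F_2')-3$, whereas tightness of $F_1\cup F_2'$ requires the value $f(F_1\cup F_2')-1$, which can be as large as $f(F_1)+f(F_2')-1$; nothing forces the required overlap in the ambient spaces $\langle A_e\rangle$. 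Second, and more decisively, the inference ``a common nonzero vector lets me enlarge a maximal tight set while preserving tightness'' cannot follow from submodularity of $f$ and $d$, transversality ($d\le f-1$ on nonempty sets, singletons tight), and the inductive hypothesis on proper subsets, because there are transversal but non-generic hyperplanes satisfying all of these for which the inference is simply false. For example, take $A_a=\langle b_1,b_2\rangle$, $A_b=\langle b_2,b_3\rangle$ in $\mathbb{F}^3$ and $H=\{x: x_1=x_3\}$: then $B_a=B_b=\langle b_2\rangle$, the singleton $\{a\}$ is a maximal tight set, $v=b_2$ is a common nonzero vector, yet the only proper superset $\{a,b\}$ is not tight, and indeed $d(\{a,b\})=1<2=(f-1)^{\rm D}(\{a,b\})$. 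So any correct proof of your splitting claim must use genericity of $H$ in a quantitative way, beyond transversality. This is precisely what the paper supplies: it works with a \emph{regular} hyperplane (one that simultaneously maximizes $\dim\langle A_e\cap H':e\in F\rangle$ over all transversal $H'$ for every $F$), and the heart of the proof (Claim A.4 inside \Cref{thm:connected}) is a perturbation argument that, from a putative bad configuration, constructs a hyperplane $H_\varepsilon$ with strictly larger span, contradicting regularity. Your proposal contains no mechanism of this kind, so the key step remains unproven.
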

A proof of \Cref{thm:dil1} is given in \cite{Raz2019}. A relatively shorter proof is given in \Cref{appendix} for readers convenience.

\section{Hadamard Products of Linear Spaces}\label{sec:hadamard}

\subsection{Algebraic matroids of Hadamard products of linear spaces}
We first collect basic facts on algebraic matroids and Hadamard products of linear spaces. 

\begin{definition}
Let $V\subseteq \mathbb{C}^m$ be an irreducible algebraic variety,
and let $\pi_E\colon \CC^m \to \CC^E$ be the coordinate projection on the variables indexed by $E\subseteq [m]$.
The algebraic matroid ${\cal M}(V)$ of $V$ is the matroid $([m],{\cal I}_V)$ with 
\[
\mathcal{I}_V = \{
E \subseteq [m] : \overline{\pi_E(V)} = \mathbb{C}^E
\}.
\]
If $V$ is the Zariski closure of the image of a polynomial map $f\colon\mathbb{C}^n \rightarrow \mathbb{C}^m$,
then for a generic point $p\in \mathbb{C}^{n}$, 
the matroid $\mathcal{M}(V)$ admits a linear representation by 
the Jacobian matrix $J f(p)$ at $p$; see, for example, \cite{rosen2020algebraic,rosen2025linearizing}.
\end{definition}

Let $L_1,\dots, L_{d}$ be linear spaces in $\mathbb{C}^m$
with $n_i = \dim L_i$.
Since the variety $L_1\ast \dots \ast L_d$  is the Zariski closure of the image of a polynomial map $h\colon \CC^{n_1} \times \cdots \times \CC^{n_d} \to \CC^m$,   $L_1\ast \dots \ast L_d$ is irreducible and its algebraic matroid is linearly represented by the Jacobian of $h$.
The following fact gives an explicit linear representation.
\begin{proposition}\label{prop:tangent}
Let $\mathbb{F} \in \{\mathbb{R}, \mathbb{C}\}$,
let $L_1,\dots, L_d\subset \mathbb{F}^m$ be linear spaces, and let $x_i$ be a generic point in $L_i$ for each $i \in [d]$.
Then the tangent space of $L_1\ast \dots \ast L_d$ at the point $x_1\ast \dots \ast x_d$ is 
\[
\left(\Asterisk_{i\neq 1} x_i \right)\ast L_1+ \left(\Asterisk_{i\neq 2} x_i \right)\ast L_2+ \dots +\left(\Asterisk_{i\neq d} x_i \right)\ast L_d.
\]
Hence,
if $Y_i \in \mathbb{F}^{m \times \dim L_i}$ is a matrix whose column space is $L_i$ for each $i \in [d]$,
then the tangent space of $L_1\ast \dots \ast L_d$ at $x_1\ast \dots \ast x_d$ is the column space of the matrix
\begin{equation*}
    \left( \left(\prod_{i \neq 1}{\rm Diag}(x_i) \right) Y_1 \ \Bigg\vert \ \left(\prod_{i \neq 2}{\rm Diag}(x_i) \right) Y_2 \ \Bigg\vert \ \cdots \ \Bigg\vert \ \left(\prod_{i \neq d}{\rm Diag}(x_i) \right) Y_d\right)
\end{equation*}
where $\diag(x)$ denotes the diagonal matrix with diagonal entries $x\in \mathbb{F}^m$.
\end{proposition}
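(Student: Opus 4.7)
The plan is to parametrise $L_1 \ast \cdots \ast L_d$ by the polynomial map $h \colon L_1 \times \cdots \times L_d \to \mathbb{F}^m$ defined by $h(y_1, \ldots, y_d) := y_1 \ast \cdots \ast y_d$, and to read off the tangent space from the image of the differential $dh$. By construction $L_1 \ast \cdots \ast L_d$ is the Zariski closure of the image of $h$, hence irreducible, so for a generic point $(x_1, \ldots, x_d)$ of the domain the image $x_1 \ast \cdots \ast x_d$ lies in the smooth locus of the target, and there the tangent space coincides with the image of $dh_{(x_1, \ldots, x_d)}$.

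Next, I would compute $dh$ explicitly. The Hadamard product is multilinear in its factors, so the usual Leibniz rule gives, for any $(v_1, \ldots, v_d) \in L_1 \times \cdots \times L_d$,
\[
dh_{(x_1, \ldots, x_d)}(v_1, \ldots, v_d) \;=\; \sum_{i=1}^d \left(\Asterisk_{j \neq i} x_j\right) \ast v_i,
\]
as one verifies coordinate by coordinate. Letting each $v_i$ range over all of $L_i$ realises the image of $dh_{(x_1, \ldots, x_d)}$ as
\[
\sum_{i=1}^d \left(\Asterisk_{j \neq i} x_j\right) \ast L_i,
\]
which is precisely the tangent space claimed in the proposition.

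For the matrix form, if $Y_i \in \mathbb{F}^{m \times \dim L_i}$ has columns spanning $L_i$, then every $v_i \in L_i$ can be written as $Y_i w_i$, and Hadamard multiplication by $\Asterisk_{j \neq i} x_j$ is the same as left multiplication by the diagonal matrix $\prod_{j \neq i} \diag(x_j)$. Placing these $d$ maps side by side yields exactly the block matrix displayed in the statement, whose column space is therefore the image of $dh$, hence the tangent space.

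The only delicate step is the first one: one must justify that for generic $(x_1, \ldots, x_d)$ the rank of $dh_{(x_1, \ldots, x_d)}$ equals $\dim(L_1 \ast \cdots \ast L_d)$ and that $x_1 \ast \cdots \ast x_d$ is a smooth point of the target. This is standard in characteristic $0$ via generic smoothness applied to the dominant map $h$ between irreducible varieties, but since no genericity is assumed on the $L_i$ themselves, I would explicitly invoke the openness of the maximal-rank locus of a polynomial map and the nonemptiness of the smooth locus of an irreducible variety in order to conclude that both conditions hold on a common nonempty Zariski open subset of $L_1 \times \cdots \times L_d$.
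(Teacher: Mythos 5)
Your proof is correct and follows essentially the same route as the paper: the paper either cites Terracini's Lemma for Hadamard products or sketches the elementary argument ``by the chain rule and induction,'' which is exactly the Jacobian/Leibniz computation you carry out (done in one step rather than inductively), together with the standard generic-smoothness justification that the image of the differential equals the tangent space at a generic point, a step the paper leaves implicit.
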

\begin{proof}
This is a direct consequence of the more general version of Terracini's Lemma for Hadamard products,
see, e.g.,~\cite{bocci2024hadamard,BCK17}.
In the case of linear spaces, one can also verify the statement by an elementary argument. 
Indeed, if $d=1$, then the statement is trivial.
The case when $d\geq 2$ follows by the chain rule and induction. 
\end{proof}

For a family of matroids on the same ground set $E$,
the \emph{weak order} is defined as follows:
for ${\cal N}_1$ and ${\cal N}_2$ in the family,
${\cal N}_1 \preceq {\cal N}_2$ if and only if 
every independent set in ${\cal N}_1$ is independent in ${\cal N}_2$. Equivalently, $r_{{\cal N}_1}(F)\leq r_{{\cal N}_2}(F)$ for all $F\subseteq E$. In particular, if $X \subset \FF^m$ is an algebraic variety and $\mathcal{N}$ is a matroid with ground set $[m]$, then
\[
    {\cal M}(X) \preceq {\cal N} \iff \dim \pi_F(X) \leq r_{{\cal N}}(F) \qquad \text{for all } F \subset [m].
\]

The following lemma showcases an important link between Hadamard products of linear spaces and Dilworth truncations.

\begin{lemma}\label{lem:improvedmain}
    Let $L_1,\ldots,L_d \subset \mathbb{C}^m$ be linear spaces not contained in any coordinate hyperplane.
    For any $k$ with $1<k<d$,
    fix $r_a$ to be the rank function of $\mathcal{M}\left(L_1\ast L_2\ast \dots \ast L_k\right)$ and
    $r_b$ to be the rank function of $\mathcal{M}\left( L_{k+1}\ast L_{k+2} \ast \dots \ast L_d\right)$.
    Then
    \begin{equation*}
        \mathcal{M}(L_1 \ast \cdots \ast L_d) \preceq \mathcal{M}_{r_a + r_b -1}.
    \end{equation*}
\end{lemma}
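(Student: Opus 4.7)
The plan is to use the tangent-space description from Proposition~\ref{prop:tangent} to decompose the tangent space to $L_1 \ast \cdots \ast L_d$ at a generic point into the two halves corresponding to the indices $\leq k$ and $>k$, and then exhibit a common nonzero vector in those halves after coordinate projection. That common vector is responsible for the $-1$ appearing in $r_a + r_b - 1$, and once the dimension inequality is established the weak-order statement follows at once from the independence criterion (\ref{eq:independent_sets_dilworth}).

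Concretely, I would fix generic $x_i \in L_i$ and set $y_a = x_1 \ast \cdots \ast x_k$ and $y_b = x_{k+1} \ast \cdots \ast x_d$. Applying Proposition~\ref{prop:tangent} to all of $L_1,\ldots,L_d$ and splitting the sum at $k$, each summand with $i \leq k$ contains $y_b$ as a factor and each summand with $i > k$ contains $y_a$ as a factor, so the tangent space $T$ to $L_1 \ast \cdots \ast L_d$ at $y_a \ast y_b$ can be rewritten as
$$T \;=\; y_b \ast T_a \;+\; y_a \ast T_b,$$
where $T_a$ and $T_b$ are the tangent spaces (again described by Proposition~\ref{prop:tangent}) to $L_1 \ast \cdots \ast L_k$ at $y_a$ and to $L_{k+1} \ast \cdots \ast L_d$ at $y_b$. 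In particular $\dim \pi_F(T_a) = r_a(F)$ and $\dim \pi_F(T_b) = r_b(F)$ for every $F \subseteq [m]$.

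The crucial observation is that, because each $L_i$ is a \emph{linear} space, in the $i$-th summand $\bigl(\Asterisk_{j\leq k,\, j\neq i} x_j\bigr) \ast L_i$ of the formula for $T_a$ one may substitute the element $x_i \in L_i$ to obtain $y_a$ itself; hence $y_a \in T_a$, and symmetrically $y_b \in T_b$. Therefore $y_a \ast y_b$ lies in both $y_b \ast T_a$ and $y_a \ast T_b$. Because no $L_i$ lies in a coordinate hyperplane, the generic $x_i$ have all nonzero entries, so $\pi_F(y_a \ast y_b)$ is a nonzero common element of $\pi_F(y_b \ast T_a)$ and $\pi_F(y_a \ast T_b)$ for any nonempty $F \subseteq [m]$; moreover the diagonal matrices $\diag(\pi_F(y_b))$ and $\diag(\pi_F(y_a))$ are invertible, giving $\dim \pi_F(y_b \ast T_a) = r_a(F)$ and $\dim \pi_F(y_a \ast T_b) = r_b(F)$. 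The dimension formula for sums of subspaces then yields
$$\dim \pi_F(L_1 \ast \cdots \ast L_d) \;=\; \dim \pi_F(T) \;\leq\; r_a(F) + r_b(F) - 1.$$

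To close, if $F$ is independent in $\mathcal{M}(L_1 \ast \cdots \ast L_d)$ then every nonempty $I \subseteq F$ is also independent, so $|I| = \dim \pi_I(L_1 \ast \cdots \ast L_d) \leq r_a(I) + r_b(I) - 1$; by (\ref{eq:independent_sets_dilworth}) this is exactly the statement that $F$ is independent in $\mathcal{M}_{r_a+r_b-1}$, which gives $\mathcal{M}(L_1 \ast \cdots \ast L_d) \preceq \mathcal{M}_{r_a+r_b-1}$. I expect the only subtle step to be recognising that $y_a$ lies in its own tangent space $T_a$ (and symmetrically for $y_b$); this is what isolates the single shared direction $y_a \ast y_b$ responsible for the $-1$, and it relies critically on the linearity of the original factors $L_i$. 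Everything else is straightforward manipulation of the explicit tangent-space formula.
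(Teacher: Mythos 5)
Your proof is correct and follows essentially the same route as the paper: the same decomposition $T = y_b \ast T_a + y_a \ast T_b$ via Proposition~\ref{prop:tangent}, the same observation that $y_a \ast y_b$ lies in both summands (supplying the $-1$), the same use of the hypothesis that no $L_i$ lies in a coordinate hyperplane, and the same passage to the induced matroid via the independence criterion. The only cosmetic difference is that you bound $\dim \pi_F$ by projecting the tangent space directly (using the Jacobian representation of each algebraic matroid), whereas the paper projects the linear spaces $L_i$ themselves and reruns the full-rank argument, noting that $\pi_F$ commutes with $\ast$; these are interchangeable.
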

\begin{proof}
We first show that the rank of $\mathcal{M}(L_1 \ast \cdots \ast L_d)$ is bounded by $r_a([m])+r_b([m])-1$.
To see this, pick any generic $x_i\in L_i$ and observe  that 
\begin{equation}\label{eq:common}
\Asterisk_{j\in I} x_{j}\in \bigcap_{s\in I} \left(\Asterisk_{j\in I\setminus \{s\}} x_j\right)\ast L_{s}
\end{equation}
for any $I\subseteq [m]$.

\Cref{prop:tangent} implies that 
the tangent space of $L_1\ast \dots \ast L_d$ at the point 
$x_1\ast \dots \ast x_d$ is 
\begin{equation}\label{eq:tangent_decomp}
\left(\Asterisk_{i=k+1}^{d} x_i \right) \ast TV_a+\left(\Asterisk_{i=1}^k x_i \right) \ast TV_b
\end{equation}
where $TV_a$ denotes the tangent space of $L_1\ast \dots \ast L_k$
at $x_1\ast \dots \ast x_k$
and $TV_b$ denotes the tangent space of $L_{k+1}\ast \dots \ast L_d$
at $x_{k+1}\ast \dots \ast x_d$.
By (\ref{eq:common}), 
$TV_a$ and $TV_b$ contain $\Asterisk_{i=1}^k x_i$ and $\Asterisk_{i=k+1}^{d} x_i$, respectively, and hence 
both $(\Asterisk_{i=k+1}^{d} x_i) \ast TV_a$ and $(\Asterisk_{i=1}^k x_i) \ast TV_b$ contains $\Asterisk_{i=1}^{\rm d} x_i$.
Since $L_i$ is not contained in a coordinate hyperplane, $\Asterisk_{i=1}^{d} x_i$ is non-zero.
Thus, the dimension of (\ref{eq:tangent_decomp}) is bounded by $r_a([m])+r_b([m])-1$,
and the rank of $\mathcal{M}(L_1 \ast \cdots \ast L_d)$ is indeed bounded by $r_a([m])+r_b([m])-1$.

To see the rank bound for any subset $E\subseteq [m]$,
recall that $\pi_E$ is the coordinate projection to $\mathbb{C}^E$.
Hence, $\pi_E$ and $\ast$ commute,
and $\overline{\pi_E(L_1\ast \dots \ast L_d)}=\overline{\pi_E(L_1)\ast \dots \ast \pi_E(L_d)}$ holds.
Therefore, we can apply the same argument
to the linear spaces $\pi_E(L_1), \dots, \pi_E(L_d) \subset \CC^E$.
\end{proof}

\subsection{Proof of \texorpdfstring{\Cref{thm:bernstein}}{Bernstein's theorem}}

We now provide a new proof of Berstein's theorem (\Cref{thm:bernstein}) using Dilworth truncations.

\begin{proof}[Proof of \Cref{thm:bernstein}]
For $i=1,2$, let $n_i=\dim L_i$ and $Y_i$ be a $m\times n_i$ matrix representing $L_i$, i.e., $L_i$ is the column space of $Y_i$.
For $e\in [m]$, we denote by $y_{i,e}$ the $e$-th row of $Y_i$.
Note that $Y_i$ is a matrix representation of the algebraic matroid ${\cal M}(L_i)$, where $e\in [m]$ is associated with $y_{i,e}$. Since $L_i$ is not contained in any coordinate hyperplane, we have that $y_{i,e} \neq {\bf 0}$ for all $e \in E$.

Pick generic elements $p_1\in \mathbb{C}^{n_1}$ and $p_2\in \mathbb{C}^{n_2}$.
Then, $x_i:=Y_ip_i$ is a generic element in $L_i$.
\Cref{prop:tangent} implies that 
the tangent space of $L_1\ast L_2$ at $x_1\ast x_2$ is 
$x_2\ast L_1+x_1\ast L_2$, or equivalently, in a matrix form, 
\begin{equation}\label{eq:berstein1}
\Big( ~ {\rm Diag}(x_2)Y_1 \ \Big\vert \ {\rm Diag}(x_1)Y_2 ~ \Big).
\end{equation}
To apply \Cref{thm:dil1},
we consider $A_e, H \subset \mathbb{C}^{n_1}\times \mathbb{C}^{n_2}$ as follows.
For $e\in [m]$, let 
\begin{equation}\label{eq:berstein2}
A_e=\langle y_{1,e}\rangle \oplus \langle y_{2,e}\rangle\subseteq \mathbb{C}^{n_1}\times \mathbb{C}^{n_2}
\end{equation}
and set
    \begin{equation*}
        H := \left\{ q=(q_1,q_2) \in \mathbb{C}^{n_1}\times \mathbb{C}^{n_2} : \sum_{j=1}^{n_1} p_1(j) q_1(j) - \sum_{j=1}^{n_2} p_2(j) q_2(j)
        =0    
        \right\}.
    \end{equation*}
Since $p_1$ and $p_2$ are generic vectors, $H$ is a generic hyperplane;
indeed, the orthogonal vector to $H$ is $(p_1,-p_2)$ which is generic because $p_1$ and $p_2$ are generic.
Observe that
\begin{equation}\label{eq:berstein3}
A_e\cap H=\langle x_2(e) y_{1,e} + x_1(e) y_{2,e}\rangle.
\end{equation}
Indeed, by (\ref{eq:berstein2}), an arbitrary element in $A_e$ is denoted by 
$\lambda_1y_{1,e}+\lambda_2y_{2,e}$ for $\lambda_1,\lambda_2\in \mathbb{C}$,
and  $\lambda_1y_{1,e}+\lambda_2y_{2,e}$ belongs to $H$ if and only if 
$\lambda_1 \left( \sum_{j=1}^{n_1} p_1(j) y_{1,e}(j) \right)=\lambda_2 \left( \sum_{j=1}^{n_2} p_2(j) y_{2,e}(j) \right)$, or equivalently
$\lambda_1 x_1(e) = \lambda_2 x_2(e)$
by $x_i(e)=(Y_ip_i)(e)=\sum_{j=1}^{n_i}  y_{i,e}(j) p_i(j)$ for $i=1,2$.
Thus, (\ref{eq:berstein3}) holds.

Comparing (\ref{eq:berstein1}) and (\ref{eq:berstein3}), 
we see that $\{A_e\cap H :e \in [m]\}$ is a linear representation of ${\cal M}(L_1\ast L_2)$.
Since $\dim A_e \geq 1$ for all $e \in E$, we can apply \Cref{thm:dil1}. Hence, for any nonempty $F\subseteq [m]$,
\begin{align*}
&r_{{\cal M}(L_1\ast L_2)}(F)\\
&=\dim\langle A_e\cap H: e\in F\rangle \\
&=\min\left\{\sum_{i=1}^t(\dim\langle A_e: e\in F_i\rangle-1): 
\{F_1,\dots, F_t\} \text{ is a partition of $F$}\right\} \\
&=\min\left\{\sum_{i=1}^t(\dim\langle y_{e,1}: e\in F_i\rangle+\dim\langle y_{e,2}: e\in F_i\rangle-1): 
\{F_1,\dots, F_t\} \text{ is a partition of $F$}\right\} \\
&=\min\left\{\sum_{i=1}^t(r_1(F_i)+r_2(F_i)-1): 
\{F_1,\dots, F_t\} \text{ is a partition of $F$}\right\} \\
&=(r_1+r_2-1)^{\rm D}(F)
\end{align*}
Hence, ${\cal M}(L_1\ast L_2)$ is equal to ${\cal M}_{r_1+r_2-1}$.
\end{proof}

\begin{remark}
    If, for instance, $L_1$ is contained in some coordinate hyperplane $\{x_i = 0\}$, then $i$ is a loop of both ${\cal M}(L_1)$ and ${\cal M}(L_1 \ast L_2)$. In this case, the matroid $\mathcal{M}(L_1  \ast L_2)$ is the unique matroid formed from $\mathcal{M}(\pi_{[m]\setminus \{i\}}(L_1) \ast \pi_{[m]\setminus \{i\}}(L_2))$ by adding the loop $i$. 
    Using this observation, we can now deal with the general case.
    If $S_1,S_2 \subset [m]$ are the sets of loops of $\mathcal{M}(L_1)$ and $\mathcal{M}(L_2)$ respectively,
    then $\mathcal{M}(L_1 \ast L_2)$ is the unique matroid formed from $\mathcal{M}(L_1' \ast L_2')$ by adding the elements $S_1 \cup S_2$ as loops,
    where $L_1' := \pi_{[m] \setminus (S_1 \cup S_2)}(L_1)$ and $L_2' := \pi_{[m] \setminus (S_1 \cup S_2)}(L_2)$.
\end{remark}

Assuming an oracle for computing each value of $f\colon2^E\rightarrow \mathbb{Z}$, 
there is a deterministic polynomial time algorithm
for computing each value of $f^{\rm D}$ or $f^{\rm MD}$, see, e.g.~\cite[Chapter 48]{schrijver2003combinatorial}.
Hence, 
\Cref{thm:bernstein} implies that the dimension of $L_1\ast L_2$ can be computed in polynomial time deterministically.

\section{Counterexamples to \texorpdfstring{\Cref{con:bernstein}}{main conjecture}}\label{sec:counterexamples}

We give two types of counterexamples to \Cref{con:bernstein}.

\subsection{First counterexample}
The first counterexample is obtained by a combinatorial consideration of Dilworth truncations. We start with the following observation.

\begin{proposition}\label{prop:dilworth_decomp}
Let $f_1, f_2,\dots, f_k$ be integer-valued monotone submodular functions on a finite set $E$ and $\ell$ be a non-negative integer.
Then, 
\begin{equation}\label{eq:dilworth_decomp}
{\cal M}_{\sum_{i=1}^k f_i^{\rm MD}-\ell}\preceq {\cal M}_{\sum_{i=1}^k f_i-\ell}.
\end{equation}
Moreover, the equality holds in (\ref{eq:dilworth_decomp}), if $f_1,\dots, f_k$ have the property that,
for each nonempty $F\subseteq E$,
there is a common minimizer $\{F_0, F_1,\dots, F_t\}$ in (\ref{eq:dilworth_decomp})
that attains 
$f_i^{\rm MD}(F)$  for all $i=1,\dots, k$ simultaneously.
\end{proposition}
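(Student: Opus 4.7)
For the first part (the weak-order inequality), I would simply observe that $f_i^{\rm MD}(I) \leq f_i(I)$ for every nonempty $I \subseteq E$, since the partition with $F_0 = \emptyset$ and $F_1 = I$ is admissible in the formula for $f_i^{\rm MD}(I)$. Summing over $i$ and subtracting $\ell$ gives the pointwise bound $\sum_i f_i^{\rm MD}(I) - \ell \leq \sum_i f_i(I) - \ell$ on nonempty subsets, and the characterisation \eqref{eq:independent_sets_dilworth} of independent sets immediately yields ${\cal I}({\cal M}_{\sum_i f_i^{\rm MD} - \ell}) \subseteq {\cal I}({\cal M}_{\sum_i f_i - \ell})$, i.e.\ the asserted weak-order relation.

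For the equality, the plan is to establish the reverse inclusion of independent sets using the common-minimizer hypothesis. Take any $F$ independent in ${\cal M}_{\sum_i f_i - \ell}$, so that $|I| \leq \sum_i f_i(I) - \ell$ for every nonempty $I \subseteq F$. For each such $I$, invoke the common minimizer $\{I_0, I_1, \ldots, I_t\}$ satisfying $f_i^{\rm MD}(I) = |I_0| + \sum_{s=1}^t f_i(I_s)$ for every $i$. Summing these $k$ identities produces
\[
\sum_i f_i^{\rm MD}(I) \;=\; k|I_0| + \sum_{s=1}^{t} \sum_i f_i(I_s).
\]
Each $I_s$ with $s \geq 1$ is a nonempty subset of $F$, so the independence hypothesis gives $|I_s| \leq \sum_i f_i(I_s) - \ell$; summing over $s$ and using $|I| = |I_0| + \sum_s |I_s|$ yields $\sum_s \sum_i f_i(I_s) \geq (|I| - |I_0|) + t\ell$. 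Substituting back,
\[
\sum_i f_i^{\rm MD}(I) - \ell \;\geq\; |I| + (k-1)|I_0| + (t-1)\ell,
\]
which is at least $|I|$ whenever $(k-1)|I_0| + (t-1)\ell \geq 0$. For $t \geq 1$ this is immediate from $k \geq 1$ and $\ell \geq 0$, closing the argument in the main case.

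The delicate point, and the one I expect to require the most care, is the degenerate case $t = 0$, in which the common minimizer collapses to $\{I_0 = I\}$ and $f_i^{\rm MD}(I) = |I|$ for every $i$. Here the needed inequality becomes $(k-1)|I| \geq \ell$, which holds automatically under the side hypothesis $\ell \leq k-1$---precisely the regime of Bernstein's conjecture where $\ell = k-1$ and $|I| \geq 1$. I would either treat this as an implicit restriction on $\ell$, or argue that the common-minimizer hypothesis is only substantive when a non-trivial refinement of $I$ is available, so that the relevant $I$ satisfy $t \geq 1$ and the main estimate applies unchanged.
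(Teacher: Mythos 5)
Your argument is essentially the paper's proof: the weak-order inequality is obtained the same way (from $f_i^{\rm MD}\leq f_i$ on nonempty sets together with the independent-set description of induced matroids), and the equality part expands the same common minimizer $\{I_0,I_1,\dots,I_t\}$ of a nonempty $I\subseteq F$ and uses independence of the parts $I_s$ in ${\cal M}_{\sum_i f_i-\ell}$; your estimate $\sum_i f_i^{\rm MD}(I)-\ell\geq |I|+(k-1)|I_0|+(t-1)\ell$ is just a slightly sharper bookkeeping of the paper's chain $|I|\leq \sum_i f_i^{\rm MD}(I)-t\ell\leq \sum_i f_i^{\rm MD}(I)-\ell$.

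The delicate point you flag at $t=0$ is genuine, and it is a gap in the paper's own proof as well: the last step $-t\ell\leq -\ell$ there tacitly assumes $t\geq 1$. Indeed, the ``moreover'' part needs some extra hypothesis of the kind you suggest. Take $k=1$, $\ell=1$, $E=\{a,b\}$, $f_1=2$ on singletons and $f_1(E)=3$: the common-minimizer condition is vacuous for $k=1$, yet ${\cal M}_{f_1-1}$ is the free matroid of rank $2$, while $f_1^{\rm MD}(I)=|I|$ makes every element a loop of ${\cal M}_{f_1^{\rm MD}-1}$, so equality fails. Since for a singleton the unique minimizer is $I_0=I$ (i.e.\ $t=0$), this example also shows that your second proposed repair---that the relevant $I$ automatically admit a minimizer with $t\geq 1$---is not available in general. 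Your first repair is the right one: assuming $\ell\leq k-1$, the case $t=0$ gives $(k-1)|I|\geq k-1\geq \ell$ because $|I|\geq 1$, and this hypothesis covers the instances the paper actually needs, namely $\ell=k-1$ (Bernstein's setting, and the use of \Cref{prop:count_matroid} with $(a,b,\ell)=(2,1,2)$). The same degenerate case shows that the range $a+2b>\ell$ in \Cref{prop:count_matroid} is slightly too generous as stated (e.g.\ $(a,b,\ell)=(1,1,2)$ on a single edge), though the graph example in the counterexample section is unaffected.
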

\begin{proof}
Since $f_i^{\rm D}(F)\leq f_i(F)$ for any nonempty $F\subseteq E$,
(\ref{eq:dilworth_decomp}) follows from \Cref{thm:combinatorial_dilworth}.

To see the latter claim, suppose $f_1,\dots, f_k$ have the property described in the statement, and consider any independent set $F$ of ${\cal M}_{\sum_{i=1}^k f_i-\ell}$. 
Then, for any nonempty $I\subseteq F$,
there is a partition $\{I_1,\dots, I_t\}$ of $I$ that satisfies $f_i^{\rm MD}(I)=|I_0|+\sum_{j=1}^t f_i(I_j)$
 for all $i=1,\dots, k$ simultaneously.
Since $I_j$ is independent in ${\cal M}_{\sum_{i=1}^k f_i-\ell}$,
we have $|I_j|\leq \sum_{i=1}^k f_i(I_j)-\ell$ for $j=1,\dots, t$.
Thus, 
$|I|=\sum_{j=0}^t |I_j|\leq |I_0|+\sum_{j=1}^t (\sum_{i=1}^k f_i(I_j)-\ell)
\leq \sum_{i=1}^kf_i^{\rm MD}(I) -t\ell\leq \sum_{i=1}^kf_i^{\rm MD}(I) -\ell$,
and $F$ is indeed independent in  ${\cal M}_{\sum_{i=1}^k f_i^{\rm MD}-\ell}$.
\end{proof}
In general, the inequality in (\ref{eq:dilworth_decomp}) can be strict,
and this strict inequality will lead to a counterexample to \Cref{con:bernstein}.
Indeed, let $L_1,L_2, L_3$ be linear spaces and $r_1,r_2, r_3$ be the rank functions of the corresponding algebraic matroids.
Then, \Cref{con:bernstein} implies that ${\cal M}(L_1\ast L_2\ast L_3)$ is equal to ${\cal M}_{r_1+r_2+r_3-2}$.
On the other hand, \Cref{lem:improvedmain} implies that 
${\cal M}(L_1\ast L_2\ast L_3)$ is at most  
the matroid induced by $r_{{\cal M}(L_1\ast L_2)}+r_{{\cal M}(L_3)}-1$, or equivalently, 
\[
{\cal M}(L_1\ast L_2\ast L_3) \preceq {\cal M}_{(r_1+r_2-1)^{\rm MD}+r_3-1}
\]
by \Cref{thm:bernstein}.
In general, ${\cal M}_{(r_1+r_2-1)^{\rm MD}+r_3-1}$ can be strictly smaller than 
${\cal M}_{r_1+r_2+r_3-2}$ in the weak order, giving counterexamples to \Cref{con:bernstein}.

An instance distinguishing ${\cal M}_{(r_1+r_2-1)^{\rm MD}+r_3-1}$ and ${\cal M}_{r_1+r_2+r_3-2}$ appears naturally in graph rigidity problems
under cylindrical normed spaces, see \cite[Theorem 52]{KITSON2020}, or \cite[Theorem 3.10]{dewar2023rigidgraphscylindricalnormed}.
Here we give an example. 

Let $k$ and $\ell$ be integers with $k,\ell\geq 0$ and $2k-\ell > 0$.
For a graph $G=(V,E)$, define a set function $c_{k,\ell}\colon 2^E \to \ZZ$ by
\[
c_{k,\ell}(F)=k|V(F)|-\ell\qquad (F\subseteq E),
\]
where $V(F)$ denotes the set of vertices adjacent to edges in $F$.
Then, $c_{k,\ell}$ is a monotone submodular function
and the matroid ${\cal M}_{c_{k,\ell}}$ induced by $c_{k,\ell}$ is known as the \emph{$(k,\ell)$-count matroid} (or \emph{$(k,\ell)$-sparsity matroid}) of $G$.
Well known examples are 
the cases when $(k,\ell)=(1,1), (1,0), (2,3)$,
where the corresponding count matroids are 
the graphic matroid, the bicircular matroid, and the generic 2-dimensional rigidity matroid of $G$, respectively.
The rank function of ${\cal M}_{c_{k,\ell}}$ is given by $c_{k,\ell}^{\rm MD}$, cf.~Equation (\ref{eq:dilworth_matroid}).

\begin{proposition}\label{prop:count_matroid}
Let $a, b, \ell$ be non-negative integers such that $a+2b > \ell$.
For a graph $G=(V,E)$, 
the matroid induced by $ac_{1,1}^{\rm MD}+bc_{1,0}^{\rm MD}-\ell$ is the $(a+b,a+\ell)$-count matroid. 
\end{proposition}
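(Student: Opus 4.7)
The plan is to invoke \Cref{prop:dilworth_decomp} with $f_1=\dots=f_a=c_{1,1}$ and $f_{a+1}=\dots=f_{a+b}=c_{1,0}$, both of which are integer-valued, monotone, and submodular on $2^E$ (standard properties of $F\mapsto |V(F)|$). A direct computation gives
\[
ac_{1,1}(F)+bc_{1,0}(F)-\ell = a(|V(F)|-1)+b|V(F)|-\ell = (a+b)|V(F)|-(a+\ell) = c_{a+b,a+\ell}(F),
\]
so the matroid on the right-hand side of \eqref{eq:dilworth_decomp} is precisely the $(a+b,a+\ell)$-count matroid. It therefore suffices to verify the equality case of \Cref{prop:dilworth_decomp}: for each nonempty $F\subseteq E$, I must exhibit a single choice $\{F_0,F_1,\dots,F_t\}$ attaining the Dilworth rank formula \eqref{eq:dilworth_matroid} for both $c_{1,1}$ and $c_{1,0}$ simultaneously.

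To construct this common minimizer, let $H_1,\dots,H_k$ be the connected components of $(V(F),F)$, with edge sets $T_1,\dots,T_k$, ordered so that $H_1,\dots,H_p$ are trees and $H_{p+1},\dots,H_k$ each contain a cycle. I would take $F_0=T_1\cup\dots\cup T_p$ and partition $F\setminus F_0$ as $\{T_{p+1},\dots,T_k\}$. Using $|T_i|=|V(H_i)|-1$ for the tree components, a short computation yields
\[
|F_0|+\sum_{j=p+1}^k c_{1,1}(T_j)=|V(F)|-k, \qquad |F_0|+\sum_{j=p+1}^k c_{1,0}(T_j)=|V(F)|-p,
\]
and these coincide respectively with the rank of $F$ in the graphic matroid ($|V(F)|$ minus the number of components) and in the bicircular matroid ($|V(F)|$ minus the number of acyclic components), hence with $c_{1,1}^{\rm MD}(F)$ and $c_{1,0}^{\rm MD}(F)$. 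Both of these classical rank identities follow by a brief inspection of spanning forests and spanning pseudoforests, so I would include only a line of justification here.

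The hard part is just spotting the common minimizer; the decisive observation is that, component by component, the $F_0$-vs.-partition choice aligns for $c_{1,1}$ and $c_{1,0}$: tree components are no worse (and strictly better for $c_{1,0}$) in $F_0$ under both functions, while components with a cycle are no worse (and strictly better for $c_{1,1}$) in the partition under both. Once this alignment is noted, the construction and verification above are routine, and \Cref{prop:dilworth_decomp} then yields $\mathcal{M}_{ac_{1,1}^{\rm MD}+bc_{1,0}^{\rm MD}-\ell}=\mathcal{M}_{c_{a+b,a+\ell}}$, which is by definition the $(a+b,a+\ell)$-count matroid.
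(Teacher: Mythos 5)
Your proposal is correct and follows essentially the same route as the paper: apply \Cref{prop:dilworth_decomp} to $a$ copies of $c_{1,1}$ and $b$ copies of $c_{1,0}$, note $ac_{1,1}+bc_{1,0}-\ell=c_{a+b,a+\ell}$, and exhibit the common minimizer that puts the edge sets of acyclic components into $F_0$ and the edge sets of components containing a cycle into the partition classes. The extra verification you include (that this choice attains the graphic and bicircular ranks $|V(F)|-k$ and $|V(F)|-p$) is just a slightly more explicit version of the paper's argument.
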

\begin{proof}
$c_{1,1}^{\rm MD}$ and $c_{1,0}^{\rm MD}$ are the rank functions of 
the graphic and the bicircular matroids, respectively.
For any $F\subseteq E$, let 
$F_0$ be the union of the edge sets of all cycle-free connected components of the graph $(V,F)$ and let 
$\{F_1,\dots, F_t\}$ be the partition of $F\setminus F_0$ such that each $F_i$ is the edge set of a connected component of $(V,F)$ having some cycle.
Then, $c_{1,\ell}^{\rm MD}(F)=|F_0|+\sum_{i=1}^t c_{1,\ell}(F_i)$,
and $\{F_0,F_1,\dots, F_t\}$ is a common minimizer of $c_{1,\ell}^{\rm MD}(F)$ for all $\ell=0,1$.
Hence, by \Cref{prop:dilworth_decomp}, 
the matroid induced by $ac_{1,1}^{\rm MD}+bc_{1,0}^{\rm MD}-\ell$ is equal to that
induced by $ac_{1,1}+bc_{1,0}-\ell$.
By the decomposition $c_{a+b,a+\ell}=ac_{1,1}+bc_{1,0}-\ell$, the statement follows.
\end{proof}

\begin{remark}
A much stronger statement than \Cref{prop:count_matroid} is stated in~\cite[Proposition A.2.2.]{whiteley1996some} without a proof,
but this stronger statement is not correct as first pointed out by T.~Jord{\'a}n.
The present discussion on count matroids is based on~\cite[Section 2]{katoh2009infinitesimal} due to Katoh and the third author.
\end{remark}

We now construct a counterexample based on the graphic matroid (the $(1,1)$-sparsity matroid) and the bicircular matroid (the $(1,0)$-sparsity matroid).
It is a well-known fact that those two matroids are linearly representable over $\mathbb{C}$; see, e.g.,~\cite[Section 5]{oxley} for graphic matroids and \cite[Section 6.10]{oxley} or \cite[Section 2]{COULLARD1991223} for bicircular matroids. 
Moreover, the matrices representing these matroids have all non-zero rows, and hence their column space is not contained in any coordinate hyperplane as long as the underlying graph has no loop.
For a graph $G=(V,E)$ and $\ell=0,1$, let $X_{1,\ell}$ be a matrix representation of the $(1,\ell)$-sparsity matroid of $G$ as a row matroid,
and let $L_{1,\ell}$ be the column space of $X_{1,\ell}$.

By \Cref{thm:bernstein} and \Cref{prop:count_matroid},
we have
\begin{equation}
\label{eqn:laman}
    {\cal M}(L_{1,1}\ast L_{1,1})={\cal M}_{c_{1,1}^{\rm MD}+c_{1,1}^{\rm MD}-1}={\cal M}_{c_{2,3}},
\end{equation}
 meaning that the rank function of ${\cal M}(L_{1,1}\ast L_{1,1})$ is 
$c_{2,3}^{\rm MD}$.
Note that this is exactly  Geiringer-Laman's theorem.
Hence, we obtain
\begin{align*}
{\cal M}(L_{1,1}\ast L_{1,1}\ast L_{1,0})
&\preceq {\cal M}_{r_{ {\cal M}(L_{1,1}\ast L_{1,1})}+r_{{\cal M}(L_{1,0})}-1}
\comment{by \Cref{lem:improvedmain}} \\
&={\cal M}_{c_{2,3}^{\rm MD}+c_{1,0}^{\rm MD}-1} \comment{by (\ref{eqn:laman})}\\
&\preceq {\cal M}_{c_{2,3}+c_{1,0}-1} \comment{by \Cref{prop:dilworth_decomp}} \\
&={\cal M}_{c_{3,4}} \\
&={\cal M}_{c_{1,1}^{\rm MD}+c_{1,1}^{\rm MD}+c_{1,0}^{\rm MD}-2} \comment{by \Cref{prop:count_matroid}}\\
&={\cal M}_{r_{\cal M}(L_{1,1})+r_{\cal M}(L_{1,1})+r_{\cal M}(L_{1,0})-2}.
\end{align*}
Bernstein's conjecture (\Cref{con:bernstein}) claims the equality throughout the above relations.
However, this is not the case in general. 
Consider for example the graph $G=(V,E)$ given in \Cref{fig:34eg}. 
The edge set $E$ is a basis of $\mathcal{M}_{c_{3,4}}$, but $E$ does not contain any subset which is both an independent set of the $(2,3)$-count matroid and which spans all the vertices of $G$.
This in turn implies that, for this graph,
the rank of ${\cal M}_{c_{3,4}}$ is $3|V|-4$ whereas 
the rank of ${\cal M}_{c_{2,3}^{\rm MD}+c_{1,0}^{\rm MD}-1}$ is at most $2|V|-4+|V|-1=3|V|-5$.
Thus, for this graph $G$, 
${\cal M}(L_{1,1}\ast L_{1,1}\ast L_{1,0})$ is strictly smaller than 
the matroid induced by 
$r_{\cal M}(L_{1,1})+r_{\cal M}(L_{1,1})+r_{\cal M}(L_{1,0})-2$ in the weak order.

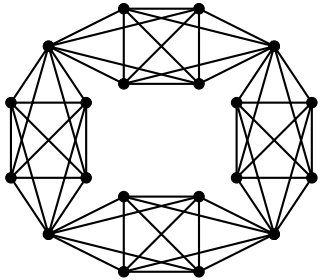
\begin{figure}[t]
 \centering
 \begin{tabular}{  l  r  }

 \begin{tikzpicture}[scale=0.5]
  \clip (-6.25,-6.5) rectangle (8.25cm,1.5cm); 
  \coordinate (A1) at (-1,-1);
  \coordinate (A2) at (1,-1);
  \coordinate (A3) at (1,1);
  \coordinate (A4) at (-1,1);
  \coordinate (A5) at (-3,0);
  \coordinate (A6) at (3,0);
	
  \draw[thick] (A1) -- (A5) -- (A4) -- (A2) -- (A3) -- (A1) -- (A4) -- (A3) -- (A6) --
	(A2) -- (A1);
	\draw[thick] (A2) -- (A5) -- (A3);
	\draw[thick] (A4) -- (A6) -- (A1);
  
  \node[draw,circle,inner sep=1.4pt,fill] at (A1) {};
  \node[draw,circle,inner sep=1.4pt,fill] at (A2) {};
  \node[draw,circle,inner sep=1.4pt,fill] at (A3) {};
  \node[draw,circle,inner sep=1.4pt,fill] at (A4) {};
  \node[draw,circle,inner sep=1.4pt,fill] at (A5) {};
  \node[draw,circle,inner sep=1.4pt,fill] at (A6) {};
	
	\coordinate (B1) at (-4,-3.5);
  \coordinate (B2) at (-2,-3.5);
  \coordinate (B3) at (-2,-1.5);
  \coordinate (B4) at (-4,-1.5);
  \coordinate (B5) at (-3,0);
  \coordinate (B6) at (-3,-5);
	
  \draw[thick] (B1) -- (B5) -- (B4) -- (B2) -- (B3) -- (B1) -- (B4) -- (B3) -- (B6) --
	(B2) -- (B1);
	\draw[thick] (B2) -- (B5) -- (B3);
	\draw[thick] (B4) -- (B6) -- (B1);
  
  \node[draw,circle,inner sep=1.4pt,fill] at (B1) {};
  \node[draw,circle,inner sep=1.4pt,fill] at (B2) {};
  \node[draw,circle,inner sep=1.4pt,fill] at (B3) {};
  \node[draw,circle,inner sep=1.4pt,fill] at (B4) {};
  \node[draw,circle,inner sep=1.4pt,fill] at (B5) {};
  \node[draw,circle,inner sep=1.4pt,fill] at (B6) {};
	
	\coordinate (C1) at (-1,-6);
  \coordinate (C2) at (1,-6);
  \coordinate (C3) at (1,-4);
  \coordinate (C4) at (-1,-4);
  \coordinate (C5) at (-3,-5);
  \coordinate (C6) at (3,-5);
	
  \draw[thick] (C1) -- (C5) -- (C4) -- (C2) -- (C3) -- (C1) -- (C4) -- (C3) -- (C6) --
	(C2) -- (C1);
	\draw[thick] (C2) -- (C5) -- (C3);
	\draw[thick] (C4) -- (C6) -- (C1);
  
  \node[draw,circle,inner sep=1.4pt,fill] at (C1) {};
  \node[draw,circle,inner sep=1.4pt,fill] at (C2) {};
  \node[draw,circle,inner sep=1.4pt,fill] at (C3) {};
  \node[draw,circle,inner sep=1.4pt,fill] at (C4) {};
  \node[draw,circle,inner sep=1.4pt,fill] at (C5) {};
  \node[draw,circle,inner sep=1.4pt,fill] at (C6) {};
	
	\coordinate (D1) at (4,-3.5);
  \coordinate (D2) at (2,-3.5);
  \coordinate (D3) at (2,-1.5);
  \coordinate (D4) at (4,-1.5);
  \coordinate (D5) at (3,0);
  \coordinate (D6) at (3,-5);
	
  \draw[thick] (D1) -- (D5) -- (D4) -- (D2) -- (D3) -- (D1) -- (D4) -- (D3) -- (D6) --
	(D2) -- (D1);
	\draw[thick] (D2) -- (D5) -- (D3);
	\draw[thick] (D4) -- (D6) -- (D1);
  
  \node[draw,circle,inner sep=1.4pt,fill] at (D1) {};
  \node[draw,circle,inner sep=1.4pt,fill] at (D2) {};
  \node[draw,circle,inner sep=1.4pt,fill] at (D3) {};
  \node[draw,circle,inner sep=1.4pt,fill] at (D4) {};
  \node[draw,circle,inner sep=1.4pt,fill] at (D5) {};
  \node[draw,circle,inner sep=1.4pt,fill] at (D6) {};
  \end{tikzpicture}

\end{tabular}
\caption{A $(3,4)$-tight graph $G$ with no spanning $(2,3)$-tight subgraph.}
\label{fig:34eg}
\end{figure}

\subsection{Second counterexample}\label{subsec:2ndcounter}

It is tempting to now posit that the necessary condition taking into account of the ordering of Dilworth truncations gives the correct exact characterisation for the Hadamard product of multiple linear spaces.
For example, when $d=3$, for each $F \subseteq [m]$ we have that
\begin{equation}\label{eq:sharper}
r_{{\cal M}(L_1\ast L_2 \ast L_3)}(F)\leq 
\min_{\sigma \in S_3}
\{((r_{\sigma(1)}+r_{\sigma(2)}-1)^{\rm MD}+ r_{\sigma(3)}-1)^{\rm MD}(F)\},
\end{equation}
where $S_3$ denotes the permutations of $\{1,2,3\}$. Although the value in the right side can be smaller than 
$(r_1+r_2+r_3-2)^{\rm MD}(F)$,
it is plausible to believe that (\ref{eq:sharper}) is actual an equality.
Unfortunately, this is also not true as shown below.

Our second counterexample is the case when each $L_i$ is a linear representation of a \emph{partition matroid}.
Let $E$ be a finite set, and let ${\cal E}=\{E_1,\dots, E_n\}$ be a partition of $E$. By definition, in the partition matroid of ${\cal E}$, $F\subseteq E$ is independent if and only if $|F\cap E_i|\leq 1$ for all $i=1,\dots, n$.
The partition matroid of ${\cal E}$ admits a linear representation over any field by the incidence matrix $I_{{\cal E}}$ of the partition, which is 
a $|E|\times n$-matrix whose $(e,i)$-th entry is 1 if $e\in E$ belongs to $E_i$ for $i\in [n]$ and 0 otherwise.

Suppose we have $d$ partitions
${\cal E}_i=\{E_{i,1}, E_{i,2}, \dots, E_{i,n_i}\}$ of $E$ for $i=1,\dots, d$.
Let $L_i$ be the column space of the incidence matrix $I_{{\cal E}_i}$. Since each of these matrices has all non-zero rows, each $L_i$ is not contained in any coordinate hyperplane.
We now show that $L_1\ast \dots \ast L_d$ is a counterexample of \Cref{con:bernstein} already in the case when $d=3$.

The combinatorics of $L_1\ast \dots \ast L_d$ can be compactly represented if we construct each instance from a \emph{$d$-partite $d$-uniform hypergraph}:
a hypergraph where the vertex set consists of $d$ disjoint sets (called vertex classes)
and each hyperedge is incident to exactly one vertex in each vertex class. 

Let $G=(V,E)$ be a $d$-partite $d$-uniform hypergraph
with vertex classes $V_1, \dots, V_d$.
Let $n_i=|V_i|$ and denote each vertex in $V_i$ by $(i,j)$ for $j\in [n_i]$, i.e., $V_i=\{i\}\times [n_i]$.
Then, for each $i \in [d]$, the hypergraph $G$ induces a partition ${\cal E}_i=\{E_{i,1},\dots, E_{i,n_i}\}$ of $E$ such that 
$
e\in E_{i,j}$ if and only if $e$ is incident to $(i,j)$ in $G$.
See the example below.

As above, let $L_i \subset \CC^E$ be the column space of the incidence matrix $I_{{\cal E}_i}$.
A generic point $x_i\in L_i$ can be written 
as $x_i =I_{{\cal E}_i} p_i$ by taking a generic vector $p_i\in \mathbb{C}^{n_i}$.
Since each $e\in E$ is written as 
$e=\{(1,j_1), (2, j_2), \dots, (d, j_d)\}$ for unique $j_i\in [n_i]$
for each $i=1,\dots, d$, we have
\begin{equation}\label{eq:counter2-1}
x_i(e)=p_i(j_i),
\end{equation}
and hence 
\[
(x_1\ast x_2 \ast \dots \ast x_d)(e)=\prod_{i=1}^d p_i(j_i).
\]
This means that $L_1\ast L_2\ast \dots \ast L_d$
is the projection of the Segre variety of order $n_1 \times \cdots \times n_d$ to $\mathbb{C}^E$.
It is known that the algebraic matroid of the latter variety is represented by the edge-vertex incidence matrix\footnote{For a hypergraph $G=(V,E)$ with $n$ vertices and $m$ edges, the edge-vertex incidence matrix is an $m\times n$ matrix whose $(e,v)$-th entry is 1 if $e$ is incident to $v$ in $G$ and otherwise 0 for $e\in E$ and $v\in V$.} $I_G$ of $G$.
This fact can be also checked easily from \Cref{prop:tangent} and (\ref{eq:counter2-1}).

The following instance is a simple example for which the rank of $I_G$ takes an unexpected value.
Let $d=3$, and let $G=(V_1\cup V_2\cup V_3,E)$ be given by 
\begin{itemize}
\item $n_i=3$ and $V_i=\{i\}\times [n_i] = \{i\}\times [3]$ for $i=1,2,3$, and 
\item  $E=\{\{(1,j_1), (2, j_2), (3, j_3)\}: j_1, j_2, j_3 \text{ are all distinct}\}$.
\end{itemize}
Then, $G$ is the 3-partite 3-uniform hypergraph pictured in \Cref{fig:hypergraph} with nine vertices and six edges.
Under some ordering of the vertices and edges, we see that
\begin{equation*}
    I_G = 
    \left(
    \begin{array}{ccc|ccc|ccc}
        1 & 0 & 0 &  0 & 1 & 0 &  0 & 0 & 1 \\ 
        1 & 0 & 0 &  0 & 0 & 1 &  0 & 1 & 0 \\ 
        0 & 1 & 0 &  1 & 0 & 0 &  0 & 0 & 1 \\ 
        0 & 1 & 0 &  0 & 0 & 1 &  1 & 0 & 0 \\ 
        0 & 0 & 1 &  1 & 0 & 0 &  0 & 1 & 0 \\ 
        0 & 0 & 1 &  0 & 1 & 0 &  1 & 0 & 0
    \end{array}
    \right).
\end{equation*}

\begin{figure}[t]
    \centering
    \includegraphics[width=0.3\linewidth]{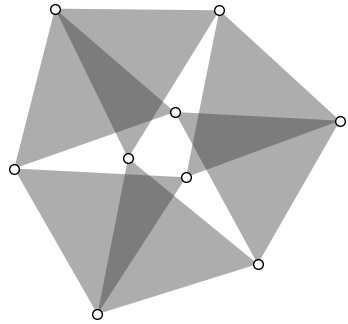}
    \caption{The 3-partite 3-uniform hypergraph described in \Cref{subsec:2ndcounter}.}
    \label{fig:hypergraph}
\end{figure}

In fact, the rows of $I_G$ are linearly dependent.
This can be verified by assigning ${\rm sign}(j_1,j_2,j_3)$ to each edge $e=\{(1,j_1), (2, j_2), (3, j_3)\}\in E$ by regarding 
$(j_1,j_2,j_3)$ as a permutation of $\{1,2,3\}$.
The resulting assignment of $\pm 1$ values forms a nonzero vector in the left kernel of $I_G$ (for the above matrix, this would take the form $(1,-1,-1,1,1,-1)$).
Hence, 
\begin{equation}\label{eq:counter2-3}
{\rm rank}\ {\cal M}(L_1\ast L_2 \ast L_3)\leq 5.
\end{equation}

On the other hand, we now prove that
\begin{equation}\label{eq:counter2-2}
\text{$E$ is independent in the matroid induced by $(r_{\sigma(1)}+r_{\sigma(2)}-1)^{\rm MD}+ r_{\sigma(3)}-1$}
\end{equation}
for any permutation $\sigma \in S_3$.
By symmetry, it suffices to prove  (\ref{eq:counter2-2}) in the case when  $\sigma$ is the identity.
Note that $(r_{1}+r_{2}-1)^{\rm MD}$ is the rank function of the graphic matroid of $G_{12}$,
where $G_{12}$ is the bipartite graph obtained from $G$ by ignoring $V_3$.
By the definition of $G$, $G_{12}$ is actually the cycle graph on the six vertices of $V_1\cup V_2$. Hence, 
\begin{itemize}
    \item for any nonempty proper subset $F$ of $E$, 
    $(r_{1}+r_{2}-1)^{\rm MD}(F)=|F|$ and $r_{3}(F)\geq 1$;
    \item $(r_{1}+r_{2}-1)^{\rm MD}(E)=|E|-1$ and $r_{3}(E)=3$,
\end{itemize}
meaning that 
$|F|\leq (r_{1}+r_{2}-1)^{\rm MD}(F)+ r_{3}(F)-1$ for any nonempty $F\subseteq E$,
and $E$ is independent in the matroid induced by 
$(r_{1}+r_{2}-1)^{\rm MD}+ r_{3}-1$.

By (\ref{eq:counter2-3}) and (\ref{eq:counter2-2}), we conclude that 
the inequality in (\ref{eq:sharper}) is strict for $L_1, L_2, L_3$ defined by $G$.

\section{Hadamard product of generic linear spaces}\label{sec:generic}

We now prove that \Cref{con:bernstein} is true if the linear spaces are chosen to be generic, in the sense of \Cref{subsection:geometric_dilworth}.
The following is a special case of a more general result of Ballico \cite[Theorem 1.2]{BALLICO}.

\begin{theorem}\label{thm:generichadamard}
    Let $L_1, \ldots, L_d \subset \CC^m$ be generic linear spaces with dimensions $n_1,\ldots,n_d$. Then, the algebraic matroid of the Hadamard product $L_1 * \cdots * L_d$ is the rank $k$ uniform matroid on $[m]$,
    where $k = \min \{m , n_1 + \cdots + n_d - (d-1)\}$.
\end{theorem}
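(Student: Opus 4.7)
The plan is to prove the theorem in three steps: establishing a universal upper bound on the matroid, exhibiting a single explicit tuple whose Hadamard product realises the full uniform matroid $U_{k,m}$, and then invoking a semicontinuity argument.

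First, I would observe that $\mathcal{M}(L_1 * \cdots * L_d) \preceq U_{k,m}$ holds for every generic tuple. Iterating \Cref{lem:improvedmain} (grouping two factors at a time, with \Cref{thm:bernstein} as the base case) or applying \Cref{prop:tangent} directly --- noting that each summand $W_j = (\Asterisk_{i \neq j} x_i) * L_j$ of the tangent space contains the common nonzero vector $\Asterisk_i x_i$ --- gives
\[
    \dim(L_1 * \cdots * L_d) \leq n_1 + \cdots + n_d - (d-1).
\]
Together with the ambient bound $\dim \leq m$, this shows $\mathcal{M}(L_1 * \cdots * L_d)$ has rank at most $k$, placing it below $U_{k,m}$ in the weak order.

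Next, I plan to realise this bound by the following explicit construction. Fix distinct $t_1, \ldots, t_m \in \CC$ and let $L_i \subset \CC^m$ be the image of the evaluation map $\CC[t]_{\leq n_i - 1} \to \CC^m$ sending $P \mapsto (P(t_1), \ldots, P(t_m))$. Every nonzero polynomial $Q \in \CC[t]$ of degree at most $\sum_i (n_i - 1) = k-1$ factors into linear factors over $\CC$; distributing the $\deg Q$ roots among $d$ bins of capacities $n_1-1, \ldots, n_d-1$ (which is possible since $\deg Q \leq \sum_i(n_i-1)$) writes $Q = P_1 \cdots P_d$ with $\deg P_i \leq n_i - 1$. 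Hence
\[
    L_1 * \cdots * L_d = \bigl\{(Q(t_1), \ldots, Q(t_m)) : Q \in \CC[t]_{\leq k-1}\bigr\},
\]
and for any $F \subseteq [m]$ with $|F| \leq k$, Lagrange interpolation at the distinct points $\{t_e : e \in F\}$ shows $\pi_F(L_1 * \cdots * L_d) = \CC^F$. So every such $F$ is independent in $\mathcal{M}(L_1 * \cdots * L_d)$ for this particular tuple.

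Finally, for each fixed $F \subseteq [m]$ with $|F| \leq k$, independence of $F$ in $\mathcal{M}(L_1 * \cdots * L_d)$ corresponds, via the Jacobian description and \Cref{prop:tangent}, to the non-vanishing of a maximal minor of the tangent matrix; this defines a Zariski-open subset of $\Gr_{n_1}(\CC^m) \times \cdots \times \Gr_{n_d}(\CC^m)$, which is non-empty by the previous step and hence dense by irreducibility. Intersecting over the finitely many such $F$ produces a dense open set on which $\mathcal{M}(L_1 * \cdots * L_d) \succeq U_{k,m}$; combined with the upper bound from the first step, equality holds on this open set. The main technical point I expect to handle carefully is the factorisation-by-distribution step: one must make sure the root-capacity argument covers polynomials of every degree up to $k-1$ (and the zero polynomial), so that the product set matches $\CC[t]_{\leq k-1}$ exactly and Lagrange interpolation applies cleanly on every $F$ of size $\leq k$.
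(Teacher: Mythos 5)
Your proposal is correct, and while it shares the paper's overall skeleton (an a priori upper bound, one explicit witness, and a semicontinuity/openness argument over the product of Grassmannians), the witness you use is genuinely different. The paper constructs $0$--$1$ matrices $Y_i$ inductively (\Cref{lem:construction}), essentially choosing each $L_i$ to represent a partition matroid as in \Cref{subsec:2ndcounter}, so that the Hadamard product becomes a projected Segre variety whose Jacobian is the hypergraph incidence matrix $(Y_1 \ \cdots \ Y_d)$; this witness only certifies the top rank $k$, after which the paper invokes \Cref{lem:lowersemi} together with the observation that coordinate projections of generic linear spaces are again generic to upgrade the dimension statement to the uniform matroid. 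Your Vandermonde/polynomial-evaluation witness is stronger: because every $Q \in \CC[t]_{\leq k-1}$ factors over $\CC$ and its linear factors can be distributed into bins of capacities $n_i-1$, the Hadamard product is exactly the evaluation image of $\CC[t]_{\leq k-1}$, so Lagrange interpolation already shows that \emph{every} $F$ with $|F| \leq k$ is independent for this single tuple, and you only need per-$F$ openness plus the upper bound from \Cref{prop:tangent} (or iterated \Cref{lem:improvedmain} with \Cref{thm:bernstein}). What each approach buys: the paper's construction is field-independent in flavour and ties the generic statement to the combinatorial examples used elsewhere in the paper, whereas yours exploits algebraic closedness but avoids the "projections of generic spaces are generic" step and makes the uniformity visible in the witness itself.

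Two small points to patch when writing this up. First, the identity $\sum_i (n_i-1) = k-1$ holds only when $k = n_1+\cdots+n_d-(d-1) \leq m$; when $m < n_1+\cdots+n_d-(d-1)$ one has $\sum_i(n_i-1) \geq k$, but your displayed description of $L_1 \ast \cdots \ast L_d$ restricted to what you need (it contains all evaluation vectors of polynomials of degree at most $k-1=m-1$, hence is all of $\CC^m$) still goes through, so only the parenthetical needs adjusting. Second, "non-vanishing of a maximal minor defines a Zariski-open subset of $\Gr_{n_1}(\CC^m)\times\cdots\times\Gr_{n_d}(\CC^m)$" requires descending from the data $(Y_1,\ldots,Y_d,p_1,\ldots,p_d)$ of matrix representatives and parameter vectors to the Grassmannians; this is exactly the content of the paper's \Cref{lem:lowersemi} (lower semicontinuity of the relevant rank), so either cite it or reproduce its short argument for each coordinate projection $\pi_F$.
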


To prove this, we require the following results.

\begin{lemma}\label{lem:lowersemi}
    For any positive integers $n_1 \leq \ldots \leq n_d \leq m$,
    the function
    \begin{equation*}
        \phi \colon \prod_{i=1}^d \Gr_{n_i}(\mathbb{C}^m) \rightarrow \mathbb{Z}_{\geq 0},  \quad ( L_1 , \ldots, L_d ) \mapsto \dim L_1 \ast \cdots \ast L_d
    \end{equation*}
    is lower semi-continuous.
\end{lemma}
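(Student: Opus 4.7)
The plan is to identify $\phi(\mathbf{L})$ with the maximum rank of the Jacobian matrix supplied by \Cref{prop:tangent}, as $\mathbf{x} = (x_1, \ldots, x_d)$ ranges over $L_1 \times \cdots \times L_d$, and then deduce lower semi-continuity from the classical fact that the rank of a matrix is itself a lower semi-continuous function of its entries.

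Concretely, I would set $V := \prod_{i=1}^d \Gr_{n_i}(\CC^m)$ and form the incidence variety
\begin{equation*}
E := \bigl\{ (\mathbf{L}, \mathbf{x}) \in V \times (\CC^m)^d : x_i \in L_i \text{ for each } i \bigr\},
\end{equation*}
which is the total space of a vector bundle over $V$ via the projection $\pi: E \to V$. On $E$ define
\begin{equation*}
\rho(\mathbf{L}, \mathbf{x}) := \dim \sum_{i=1}^d \left( \Asterisk_{j \neq i} x_j \right) \ast L_i.
\end{equation*}
By \Cref{prop:tangent}, $\rho(\mathbf{L}, \mathbf{x})$ is the rank of an explicit Jacobian matrix whose entries are polynomial in $\mathbf{x}$ and in the coordinates of any local algebraic basis $Y_i(\mathbf{L})$ of the tautological bundle restricted to a chart of $V$. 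Since ``rank $\geq k$'' is an open condition, $\rho$ is lower semi-continuous on $E$, and \Cref{prop:tangent} gives $\phi(\mathbf{L}) = \max_{\mathbf{x} \in \pi^{-1}(\mathbf{L})} \rho(\mathbf{L}, \mathbf{x})$, with the maximum attained on a dense open subset of the fiber.

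To conclude, I would fix $k$ and suppose $\mathbf{L}^0 \in V$ satisfies $\phi(\mathbf{L}^0) \geq k$. Picking $\mathbf{x}^0 \in \pi^{-1}(\mathbf{L}^0)$ with $\rho(\mathbf{L}^0, \mathbf{x}^0) \geq k$, the set $W_k := \{\rho \geq k\}$ is open in $E$, and because $\pi$ is a vector bundle (hence an open map), $\pi(W_k)$ is an open neighborhood of $\mathbf{L}^0$ in $V$. Any $\mathbf{L} \in \pi(W_k)$ admits some $\mathbf{x}$ with $(\mathbf{L}, \mathbf{x}) \in W_k$, so $\phi(\mathbf{L}) \geq k$, proving that $\{\phi \geq k\}$ is open. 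The one subtlety requiring care is to check that $\rho$ is genuinely globally well-defined on $E$ and lower semi-continuous independently of the local chart and basis chosen for the tautological bundle; this reduces to the invariance of rank under change of basis together with the standard patching of trivializations, and I expect this bookkeeping, rather than any deeper geometric difficulty, to be the main technical step.
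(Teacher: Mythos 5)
Your proof is correct and is essentially the paper's argument: both identify $\phi$ with the maximal rank of the explicit Jacobian from \Cref{prop:tangent} over an auxiliary parameter, invoke lower semi-continuity of matrix rank, and descend to $\prod_i \Gr_{n_i}(\CC^m)$ along an open surjection (your tautological-bundle projection versus the paper's quotient from tuples of full-rank matrices $Y_i$ with fixed coordinate vectors $p_i$). Your openness-of-$\pi$ step, together with the remark that $\max_{\mathbf{x}}\rho(\mathbf{L},\mathbf{x})=\phi(\mathbf{L})$ because the fiber is irreducible and $\rho$ attains its generic value $\phi(\mathbf{L})$ on a dense open subset, just makes explicit what the paper dismisses as ``easy to see.''
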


\begin{proof}
    For any choice of $p=(p_1,\ldots,p_d)$ with $p_i \in \mathbb{C}^{n_i}$ for each $i \in [d]$,
    fix $V_i \subset \mathbb{C}^{m \times n_i}$ to be the nonempty Zariski open subset of matrices of rank $n_i$. Define the map $\psi_p\colon \prod_{i=1}^d V_i \rightarrow \mathbb{C}^{m \times (n_1 + \cdots + n_d)}$ given by
    \[
        (Y_1,\ldots,Y_d) \mapsto \left( \left(\prod_{i \neq 1}{\rm Diag}(Y_ip_i) \right) Y_1 \ \Bigg\vert \ \left(\prod_{i \neq 2}{\rm Diag}(Y_ip_i) \right) Y_2 \ \Bigg\vert \ \cdots \ \Bigg\vert \ \left(\prod_{i \neq d}{\rm Diag}(Y_ip_i) \right) Y_d\right).
    \]
    Since the rank function is lower semi-continuous,
    it is clear that $\rank \psi_p$ is also lower semi-continuous.
    Fix the natural quotient map $\pi \colon \prod_{i=1}^d V_i \rightarrow \prod_{i=1}^d \Gr_{n_i}(\mathbb{C}^m)$.
    If $Y_i \in V_i$ is chosen to be a matrix representative of $L_i$,
    then there exists a nonempty Zariski open set $U \subset \prod_{i=1}^d \mathbb{C}^{n_i}$ such that
    \begin{equation*}
        \phi (L_1,\ldots,L_d) = \rank \psi_p(Y_1,\ldots,Y_d) \qquad \text{ for all } p \in U.
    \end{equation*}
    From this construction, it is now easy to see that $\phi$ is lower semi-continuous. 
\end{proof}

The following lemma provides an inductive construction for linear spaces $L_1,\ldots, L_d \subset \mathbb{C}^m$ where \Cref{con:bernstein} holds.

\begin{lemma}\label{lem:construction}
    For any positive integers $n_1,\ldots, n_d, m$ with $n_i \leq m$ for each $i \in [d]$,
    there exist matrices $Y_1 \in \mathbb{C}^{m \times n_1}, \ldots, Y_d \in \mathbb{C}^{m \times n_d}$ with $\operatorname{rank} Y_i = n_i$ such that the following holds:
    \begin{enumerate}
        \item Each row of each matrix $Y_i$ contains exactly one entry which is 1, and all other entries for the row are 0.
        \item The matrix concatenation $(Y_1 \  \cdots \  Y_d )$ has rank $\min \{ m , n_1 + \cdots + n_d - (d-1)\}$.
    \end{enumerate}
\end{lemma}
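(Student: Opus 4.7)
The plan is to construct $Y_1, \ldots, Y_d$ explicitly as incidence matrices of carefully chosen partitions of $[m]$. Since each partition has $n_i$ nonempty disjoint parts, the columns of $Y_i$ are linearly independent indicator vectors, which immediately gives property (i) and $\operatorname{rank} Y_i = n_i$. The task then reduces to designing the partitions so that the concatenated matrix has the prescribed rank.

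Set $t_0 := 0$ and $t_i := n_1 + \cdots + n_i - (i-1)$, so the target rank is $k = \min\{m, t_d\}$. Let $j^{*}$ be the smallest index $i$ with $t_i > m$, or $j^{*} := d+1$ if no such index exists; since $n_1 \leq m$ we always have $j^{*} \geq 2$. For $1 \leq i < j^{*}$, I would take the partition $\mathcal{P}_i$ consisting of the block $\{1, \ldots, t_{i-1}\}$ (omitted when $i = 1$), the $n_i - 2$ singletons $\{t_{i-1}+1\}, \ldots, \{t_i - 1\}$, and the tail $\{t_i, \ldots, m\}$; the degenerate case $n_i = 1$ is handled by $\mathcal{P}_i = \{[m]\}$. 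For $i = j^{*} \leq d$, I would use a \emph{transition} partition: the singletons $\{t_{j^{*}-1}+1\}, \ldots, \{m\}$, together with any partition of $\{1, \ldots, t_{j^{*}-1}\}$ into $n_{j^{*}} - (m - t_{j^{*}-1})$ nonempty parts. The feasibility $1 \leq n_{j^{*}} - (m - t_{j^{*}-1}) \leq t_{j^{*}-1}$ follows from $t_{j^{*}-1} \leq m < t_{j^{*}}$ together with $n_{j^{*}} \leq m$. For $i > j^{*}$, any partition of $[m]$ into $n_i$ nonempty parts works, e.g.\ $\{1\}, \ldots, \{n_i - 1\}, \{n_i, \ldots, m\}$.

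Property (ii) then follows by induction on $i$: I would show that $\operatorname{col}(Y_1 \mid \cdots \mid Y_i) = \operatorname{span}(e_1, \ldots, e_{t_i - 1}, \chi_{\{t_i, \ldots, m\}})$ for $i < j^{*}$ (of dimension exactly $t_i$), and equals $\mathbb{C}^m$ for $j^{*} \leq i \leq d$. In the inductive step for $i < j^{*}$, the new columns of $Y_i$ are $\chi_{\{1, \ldots, t_{i-1}\}}$, the singletons $e_{t_{i-1}+1}, \ldots, e_{t_i - 1}$, and $\chi_{\{t_i, \ldots, m\}}$; the identity $\chi_{\{1, \ldots, t_{i-1}\}} = e_1 + \cdots + e_{t_{i-1}}$ combined with the inductive hypothesis extracts $e_{t_{i-1}}$, so after adding the new singletons we have $e_1, \ldots, e_{t_i - 1}$, and $\chi_{\{t_i, \ldots, m\}}$ appears directly as a new column. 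At the transition $i = j^{*}$, combining the inductive $\chi_{\{t_{j^{*}-1}, \ldots, m\}}$ with the new singletons $e_{t_{j^{*}-1}+1}, \ldots, e_m$ extracts $e_{t_{j^{*}-1}}$, yielding all of $\mathbb{C}^m$.

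Consequently $\operatorname{rank}(Y_1 \mid \cdots \mid Y_d) = t_d$ when $j^{*} = d+1$ and $= m$ otherwise, matching $k = \min\{m, t_d\}$ in both regimes. The main obstacle is purely bookkeeping: one must verify the feasibility of the transition partition (which reduces to the short arithmetic inequalities above) and treat the degenerate cases such as $n_i = 1$ or $n_1 = m$ separately; all of these verifications are routine.
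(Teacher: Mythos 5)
Your proof is correct, and it takes a genuinely different route from the paper. You give a fully explicit construction: each $Y_i$ is the incidence matrix of a concretely specified partition of $[m]$ (initial block $\{1,\dots,t_{i-1}\}$, singletons, tail), and you compute the column space of $(Y_1\mid\cdots\mid Y_i)$ in closed form as $\operatorname{span}(e_1,\dots,e_{t_i-1},\chi_{\{t_i,\dots,m\}})$ before the transition index $j^*$ and as $\mathbb{C}^m$ afterwards; the feasibility inequalities $1\le n_{j^*}-(m-t_{j^*-1})\le t_{j^*-1}$ you check are exactly the right ones, and the flagged degenerate cases ($n_i=1$, $t_{j^*-1}=m$) indeed go through routinely since $\chi_{[m]}$ and $\chi_{\{m\}}$ already lie in the inductively described span. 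The paper instead argues by induction on $n_1$, starting from all-ones columns and adding one column at a time; when the current concatenation has rank less than $m$ it picks a circuit of the row matroid, uses the sign pattern of the (unique) left-kernel vector supported on that circuit to split a column of $Y_1'$, and shows the rank goes up by exactly one. Both arguments produce $0/1$ matrices with exactly one $1$ per row (i.e., partition matrices), but yours buys complete explicitness — the witnesses and their combined column space are written down directly — at the cost of some index bookkeeping around the transition step, whereas the paper's incremental circuit/kernel argument avoids tracking which coordinates have been consumed but is less constructive about what the final matrices look like.
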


\begin{proof}
    Throughout the proof, we fix our choice of $m$.
    If $n_1 = \cdots = n_d = 1$,
    then we set each $Y_i$ to be the unique all-ones $m \times 1$ matrix.
    With this, we see that $(Y_1 \  \cdots \  Y_d )$ has rank $\min \{ m , 1 + \cdots + 1 - (d-1)\} = 1$.
    Now suppose that the statement of the lemma holds with integers $n_1-1, n_2 \ldots ,n_d$ witnessed by the matrices $Y_1', Y_2, \ldots, Y_d$.
    There are now two possible cases.

    \textbf{Case 1:} Suppose that the concatenation $(Y_1' \ Y_2 \  \cdots \  Y_d )$ has rank $m$.
    As $n_1 -1 < m$,
    there exists a column $j$ of $Y_1'$ which contains at least two non-zero entries on rows $s,t \in [m]$.
    We now set $Y_1$ to be the matrix formed from $Y'_1$ by adding a new column $n_1$ with a single 1 in the $s$-th row,
    and replacing the $(j,s)$ entry with a 0.
    From this it is easy to see that $\rank Y_1 = \rank Y'_1 + 1 = n_1$,
    and
    \begin{equation*}
        m = \rank ( Y_1' \ Y_2 \  \cdots \  Y_d ) \leq \rank ( Y_1 \ Y_2 \  \cdots \  Y_d ) \leq m
    \end{equation*}
    as desired.

    \textbf{Case 2:} Suppose that $(Y_1' \ Y_2 \  \cdots \  Y_d )$ has rank strictly less than $m$.
    Then there exists a circuit $C \subset [m]$ of the row matroid of $(Y_1' \ Y_2 \  \cdots \  Y_d )$.
    Since all the entries of $(Y_1' \ Y_2 \  \cdots \  Y_d )$ are 0 or 1, there exists a unique (up to scalar multiplication) vector $\alpha \in \mathbb{R}^m$ with support $C$ contained in the left kernel of $(Y_1' \ Y_2 \  \cdots \  Y_d )$.
    Moreover, $\alpha$ must have both positive and negative entries (since all entries of $(Y_1' \ Y_2 \  \cdots \  Y_d )$ are non-negative).
    Set $C_+ := \{ e \in C : \alpha(e) >0 \}$ and $C_- := \{ e \in C : \alpha(e) < 0 \}$.

    Choose any $a \in C_+$ and set $c$ to be the unique column of $Y'_1$ which has a 1 at position $a$.
    Since the dot product of $\alpha$ with the $c$-th column of $Y'_1$ must sum to 0,
    there exists $b \in C_-$ such that $Y'_1(b,c) = 1$ also.
    We now set $Y_1 \in \mathbb{C}^{m \times n_1}$ to be the matrix formed from $Y'_1$ by adding a new column $n_1$ such that the following holds for each entry $(i,j)$:
    \begin{equation*}
        Y_1 (i,j) := 
        \begin{cases}
            Y_1'(j,c) &\text{if } j=n_1, ~ i \in C_+, \\
            0 &\text{if } j=n_1, ~ i \notin C_+, \\
            0 &\text{if } j=c, ~ i \in C_+, \\
            Y_1'(i,j) &\text{otherwise}
        \end{cases}
    \end{equation*}
    It is clear that $\rank Y_1 = \rank Y_1' + 1 = n_1$.
    We also observe that the left kernel of $(Y_1 \ Y_2 \ \cdots \ Y_d)$ is contained in the left kernel of $(Y_1' \ Y_2 \ \cdots \ Y_d)$ but the former does not contain the vector $\alpha$.
    It hence follows that
    \begin{equation*}
        \rank (Y_1 \ Y_2 \ \cdots \ Y_d) \geq \rank (Y'_1 \ Y_2 \ \cdots \ Y_d) + 1 =  n_1 + \cdots + n_d - (d-1).
    \end{equation*}
    As adding the $n_1$-th column of $Y_1$ to $Y'_1$ produces a matrix of the form $(Y'_1 \ v)$ for some column vector $v$,
    we have
    \begin{equation*}
        \rank (Y_1 \ Y_2 \ \cdots \ Y_d)\leq \rank (Y'_1 \ Y_2 \ \cdots \ Y_d) + 1 =  n_1 + \cdots + n_d - (d-1).
    \end{equation*}
    This now provides the desired equality.
\end{proof}

\begin{proof}[Proof of \Cref{thm:generichadamard}]
    Fix $Y_1,\ldots,Y_d$ to be the matrices described in \Cref{lem:construction} with corresponding column spaces $L_1,\ldots,L_d$.
    The algebraic matroid of $L_1 \ast \cdots \ast L_d$ is the row matroid of $(Y_1 \ \cdots \ Y_d)$;
    see \Cref{subsec:2ndcounter} for more details.
    Hence $\dim (L_1 \ast \cdots \ast L_d) = k$.
    It now follows from \Cref{lem:lowersemi} that this property holds for any generic linear spaces $L_1,\ldots,L_d$.
    As coordinate projections of generic linear spaces are generic linear spaces,
    it follows that $\mathcal{M}(L_1 \ast \cdots \ast L_d)$ is the rank $k$ uniform matroid on $[m]$.
\end{proof}

\section{Amoebas of complex linear spaces}\label{sec:amoeba}

Throughout this section we denote the complex conjugate of a vector $v$ (respectively, linear space $L$) by $\overline{v}$ (respectively, $\overline{L}$), and we denote the imaginary unit of $\mathbb{C}$ by $\mathbf{i}$.

Given $\mathbb{C}^* := \mathbb{C} \setminus \{0\}$,
we define the continuous map
\begin{equation*}
    \Log \colon (\mathbb{C}^*)^m \rightarrow \mathbb{R}^m , ~ (z_1,\ldots,z_m) \mapsto (\log |z_1|, \ldots \log |z_m|).
\end{equation*}

\begin{definition}
    Let $V \subset \mathbb{C}^m$ be an irreducible algebraic variety not contained in a coordinate hyperplane.
    The \emph{amoeba} $\mathcal{A}(V)$ of $V$ is the closure of $\Log(V \cap (\mathbb{C}^*)^m)$ in the Euclidean topology.    
\end{definition}

In this section we provide an alternative proof for the following result.

\begin{theorem}[Draisma et al.~\cite{amoeba}]\label{t:amoeba}
    Let $V \subset \mathbb{C}^m$ be a linear space not contained in a coordinate hyperplane. Denote by $r_{{\cal M}(V)}$ the rank function of the algebraic matroid of $V$. Then:
    \[
        \dim \mathcal{A}(V)  = \min\left\{
        \sum_{i=1}^t(2 r_{\mathcal{M}(V)}(E_i) - 1) : \text{$\{E_1,\dots, E_t\}$ is a partition of $[m]$}
        \right\}.
    \]
\end{theorem}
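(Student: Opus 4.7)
The plan is to reduce the statement to Bernstein's theorem (\Cref{thm:bernstein}) applied to the Hadamard product $V \ast \overline{V}$, and then to unfold the induced matroid rank via the Dilworth truncation formula (\ref{eq:dilworth_matroid}). The crux is the identity
\[
\dim_{\mathbb{R}} \mathcal{A}(V) \;=\; \dim_{\mathbb{C}}(V \ast \overline{V}).
\]
Granting this identity, applying \Cref{thm:bernstein} with $L_1 = V$ and $L_2 = \overline{V}$ gives $\mathcal{M}(V \ast \overline{V}) = \mathcal{M}_{2 r_{\mathcal{M}(V)} - 1}$, because conjugation commutes with coordinate projections and so $\mathcal{M}(\overline{V}) = \mathcal{M}(V)$. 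Evaluating the rank of this matroid on $[m]$ via (\ref{eq:dilworth_matroid}) with $f = 2 r_{\mathcal{M}(V)} - 1$, and noting that each singleton $\{e\}$ satisfies $f(\{e\}) = 1 = |\{e\}|$ (since $V$ avoids every coordinate hyperplane, so $r_{\mathcal{M}(V)}(\{e\}) = 1$), the ``$|F_0|$'' term in (\ref{eq:dilworth_matroid}) can be absorbed into singleton blocks of the partition, yielding exactly the formula in the statement.

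For the identity, I would work at a generic point $z \in V \cap (\mathbb{C}^*)^m$ and compute the differential of $\Log|_V$: since $d \log|z_j| = \Re(dz_j / z_j)$, the differential sends a real tangent vector $v \in V$ to $(\Re(v_j / z_j))_{j \in [m]}$, so
\[
\dim_{\mathbb{R}} \mathcal{A}(V) \;=\; \dim_{\mathbb{R}} \Re\bigl( \diag(1/z)\, V \bigr).
\]
A short linear-algebra lemma gives $\dim_{\mathbb{R}} \Re(W) = \dim_{\mathbb{C}} (W + \overline{W})$ for any complex subspace $W \subseteq \mathbb{C}^m$: the space $W + \overline{W}$ is closed under conjugation, hence is the complexification of its real part, which one checks to equal $\Re(W)$. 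Applying this with $W = \diag(1/z)\, V$ and rescaling by the invertible diagonal $\diag(z \ast \overline{z})$ yields
\[
\dim_{\mathbb{R}} \mathcal{A}(V) \;=\; \dim_{\mathbb{C}}\bigl( \diag(\overline{z})\, V + \diag(z)\, \overline{V} \bigr),
\]
and by \Cref{prop:tangent} (with $d = 2$) the right-hand side is the complex dimension of the tangent space of $V \ast \overline{V}$ at $z \ast \overline{z}$. Since $\overline{z}$ is generic in $\overline{V}$ whenever $z$ is generic in $V$, this tangent dimension equals $\dim_{\mathbb{C}}(V \ast \overline{V})$.

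The main obstacle is the real-to-complex bridge in the second paragraph: one must verify that a generic $z \in V$ lies in $(\mathbb{C}^*)^m$ (which follows from the hypothesis on $V$, since $V \cap \{z_j \neq 0\}$ is a nonempty Zariski open subset for each $j$) and that the real dimension of the Euclidean closure of $\Log(V \cap (\mathbb{C}^*)^m)$ agrees with the generic rank of $d\Log|_V$, a standard fact for images of smooth semi-algebraic maps. Once these are in place, the theorem follows by combining \Cref{thm:bernstein}, \Cref{prop:tangent}, and the rank formula (\ref{eq:dilworth_matroid}).
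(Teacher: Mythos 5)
Your overall route is the same as the paper's: compute the differential of $\Log$ at a generic point $z\in V\cap(\mathbb{C}^*)^m$, identify $\dim_{\mathbb{R}}\mathcal{A}(V)$ with $\dim_{\mathbb{C}}(V\ast\overline{V})$, and then combine \Cref{thm:bernstein} (using $\mathcal{M}(\overline{V})=\mathcal{M}(V)$) with the Dilworth rank formula (\ref{eq:dilworth_matroid}). Your observation that the $|F_0|$ term can be absorbed into singleton blocks because $2r_{\mathcal{M}(V)}(\{e\})-1=1$ for every $e$ is correct and is a detail the paper leaves implicit; likewise your lemma $\dim_{\mathbb{R}}\Re(W)=\dim_{\mathbb{C}}(W+\overline{W})$ is a clean substitute for the paper's explicit chain of matrix-rank manipulations. (One small caveat: $\Log$ is not semi-algebraic and the amoeba need not be a semi-algebraic set, so the ``image dimension equals generic rank'' step should be justified for real-analytic maps; the paper glosses this point in the same way.)

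There is, however, one genuine gap, at the final step of your bridge: you conclude that $\dim_{\mathbb{C}}\bigl(\diag(\overline{z})V+\diag(z)\overline{V}\bigr)=\dim_{\mathbb{C}}(V\ast\overline{V})$ ``since $\overline{z}$ is generic in $\overline{V}$ whenever $z$ is generic in $V$''. Separate genericity of the two factors does not give genericity of the pair $(z,\overline{z})$ in $V\times\overline{V}$: the pair lies on the conjugate-diagonal, a thin subset, just as $(x,x)$ lies on the diagonal even when $x$ is generic, so \Cref{prop:tangent} cannot be invoked directly. The inequality $\leq$ holds for every $z$ (the space in question is the image of the differential of the Hadamard parametrisation at $(z,\overline{z})$), but for $\geq$ you must show that the nonempty Zariski-open locus of parameter pairs $(q_1,q_2)\in\mathbb{C}^n\times\mathbb{C}^n$ where the Jacobian $\bigl(\diag(\overline{M}q_2)M \mid \diag(Mq_1)\overline{M}\bigr)$ attains its maximal rank actually meets the set $\{(q,\overline{q}):q\in\mathbb{C}^n\}$; a priori the defining maximal minors could all vanish identically on that set. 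This is exactly what the paper's \Cref{lem:technlem} rules out: $\{(q,\overline{q})\}$ is Zariski dense in $\mathbb{C}^{2n}$, hence the max-rank locus meets it, and the exceptional $q$'s form a proper real-algebraic subset, so a generic real choice of $z=Mq$ works. Once you insert this density lemma (a short computation with real and imaginary parts), your argument is complete and coincides with the paper's proof.
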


The key step of our proof is to express the (real) dimension of the amoeba ${\cal A}(V)$ as the (complex) dimension of the Hadamard product $V * \overline{V}$. Before starting with the proof, we first require the following easy result.

\begin{lemma}\label{lem:technlem}
    Let $Z = \{(q,\overline{q}) : q \in \mathbb{C}^n\}$.
    Then $Z$ is Zariski-dense in $\mathbb{C}^{2n}$.
\end{lemma}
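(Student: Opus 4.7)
The plan is to show density by reducing to the classical fact that $\mathbb{R}^N$ is Zariski-dense in $\mathbb{C}^N$. The obstruction to applying this directly is that $Z$ is cut out by the antiholomorphic condition $z_{n+i} = \overline{z_i}$, which is not polynomial. To circumvent this, I would perform a linear change of coordinates that converts the antiholomorphic constraint into a real-slice constraint.

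First, I would introduce new complex coordinates $u_1,\ldots,u_n,v_1,\ldots,v_n$ on $\mathbb{C}^{2n}$ via the invertible linear map
\[
T\colon \mathbb{C}^{2n} \to \mathbb{C}^{2n}, \quad (z_1,\ldots,z_n,w_1,\ldots,w_n) \mapsto \left(\tfrac{z_i+w_i}{2},\tfrac{z_i-w_i}{2\mathbf{i}}\right)_{i\in[n]}.
\]
A point $(q,\overline{q}) \in Z$ with $q = u + \mathbf{i} v$, where $u,v \in \mathbb{R}^n$, is mapped by $T$ precisely to $(u,v) \in \mathbb{R}^n \times \mathbb{R}^n$, and conversely every real point arises this way. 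Hence $T(Z) = \mathbb{R}^{2n}$.

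Next, since $T$ is a linear (hence polynomial) isomorphism of $\mathbb{C}^{2n}$, it is a homeomorphism in the Zariski topology, so Zariski-density of $Z$ in $\mathbb{C}^{2n}$ is equivalent to Zariski-density of $\mathbb{R}^{2n}$ in $\mathbb{C}^{2n}$. The latter is standard: if $g \in \mathbb{C}[u_1,\ldots,u_n,v_1,\ldots,v_n]$ vanishes on $\mathbb{R}^{2n}$, then expanding $g$ in monomials and using that a complex polynomial in real variables which vanishes on $\mathbb{R}^{2n}$ must have all coefficients equal to zero (e.g.\ by induction on the number of variables, or directly by iterated partial derivatives at the origin) forces $g \equiv 0$.

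There is essentially no hard step here; the only thing worth being careful about is making the parametrization $(q,\overline{q}) \leftrightarrow (u,v)$ explicit so that the change of variables manifestly sends $Z$ onto $\mathbb{R}^{2n}$, after which the classical density of $\mathbb{R}^{2n}$ in $\mathbb{C}^{2n}$ closes the argument.
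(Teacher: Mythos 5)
Your proof is correct and follows essentially the same route as the paper: both use the linear change of variables $(z,w)\mapsto\bigl(\tfrac{z+w}{2},\tfrac{z-w}{2\mathbf{i}}\bigr)$ (the paper applies it as a substitution inside a vanishing polynomial $f$, you apply it as a Zariski homeomorphism of $\mathbb{C}^{2n}$ carrying $Z$ onto $\mathbb{R}^{2n}$) and then conclude from the standard fact that a complex polynomial vanishing on all real points is identically zero. The geometric packaging via the automorphism $T$ is a perfectly valid, slightly tidier way of saying the same thing.
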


\begin{proof}
    Let $f \in \mathbb{C}[\mathbf{x}, \mathbf{y}]$ a polynomial such that $f(\mathbf{z},\overline{\mathbf{z}}) = 0$ for all $\mathbf{z} \in \mathbb{C}^n$. Hence, $f(\mathbf{x} + \mathbf{i} \mathbf{y}, \mathbf{x} - \mathbf{i} \mathbf{y}) = 0$ for all $\mathbf{x}, \mathbf{y} \in \mathbb{R}^n$. Define the complex polynomial $g(\mathbf{x},\mathbf{y}) = f(\mathbf{x} + \mathbf{i} \mathbf{y}, \mathbf{x} - \mathbf{i} \mathbf{y}) \in \mathbb{C}[\mathbf{x},\mathbf{y}]$. Since $f(\mathbf{x}, \mathbf{y}) = g\bigl( \frac{\mathbf{x + \mathbf{y}}}{2}, \frac{\mathbf{x} - \mathbf{y}}{2\mathbf{i}}\bigr)$, we have that $f = 0$ if and only if $g = 0$. Now, write $g = g_1 + i g_2$, where $g_1, g_2 \in \mathbb{R}[\mathbf{x},\mathbf{y}]$. Since $g(\mathbf{x, \mathbf{y}}) = 0$ for all $\mathbf{x}, \mathbf{y} \in \mathbb{R}^n$, we have that $g_1(\mathbf{x},\mathbf{y}) = g_2(\mathbf{x},\mathbf{y}) = 0$ for all $\mathbf{x}, \mathbf{y} \in \mathbb{R}^n$. Hence, $g_1 = g_2 = 0$ as polynomials. It follows that $g = 0$, which implies that $f = 0$.
\end{proof}

\begin{proof}[Proof of \Cref{t:amoeba}]
To analyse the image of $\Log$, we want to consider it as a map between real manifolds.
To do this,
we define the following maps: 
\begin{gather*}
    \phi\colon (\mathbb{R}^2\setminus \{(0,0)\})^{m}\rightarrow (\mathbb{C}^*)^m, ~ \big( a_i, b_i  \big)_{i \in [m]} \mapsto  (a_i + b_i \mathbf{i})_{i \in [m]},\\
    \Log_{\mathbb{R}} \colon (\mathbb{R}^2\setminus \{(0,0)\})^{m} \rightarrow \mathbb{R}^m , ~ \big(a_i, b_i  \big)_{i \in [m]} \mapsto \frac{1}{2}\left(\log (a_1^2 + b_1^2), \ldots, \log (a_m^2 + b_m^2)\right).
\end{gather*}
We note here that $\Log = \Log_{\mathbb{R}} \circ \phi^{-1}$.
The map $\Log_{\mathbb{R}}$ is also a real analytic map, whose Jacobian at  $p = \big( (a_i)_{i \in [m]}, ( b_i )_{i \in [m]} \big)$ is 
\begin{equation*}
    J_p (\Log_{\mathbb{R}}) = \Bigg(\diag \left(\left( \frac{2a_i}{a_i^2 + b_i^2} \right)_{i \in [m]}\right) \ \Bigg\vert \ \diag \left(\left( \frac{2b_i}{a_i^2 + b_i^2} \right)_{i \in [m]}\right) \Bigg).
\end{equation*}
Now take $V \subset \mathbb{C}^m$ to be a linear space not contained in a coordinate hyperplane.
Let $M \in \mathbb{C}^{m \times n}$ be a matrix with column span $V$, where $n = \dim V$.
Choose a generic point $z \in V \cap (\mathbb{C}^*)^m$;
equivalently, choose a generic point $z' \in \mathbb{C}^{\rm d}$ and then fix $z = Mz'$.
Now set $p = (a,b) =  \phi^{-1}(z)$.
We observe here that if $M = A +\mathbf{i} B$ for real matrices $A, B$ and $z' = v + \mathbf{i} w$ for real vectors $v,w$,
then $a = A v - B w$ and $b = B v + A w$.
With this set-up,
$\mathcal{A}(V)$ can be described as the Euclidean closure of the image of the real analytic function
\begin{equation*}
    (\mathbb{R}^n)^2 \dashrightarrow \mathbb{R}^m, ~ (\tilde{v} ,\tilde{w}) \mapsto \Log_{\mathbb{R}}(A\tilde{v} - B\tilde{w},B\tilde{v}+ A\tilde{w}).
\end{equation*}
Hence,
\begin{align*}
    \dim \mathcal{A}(V) &= \rank \left(J_p (\Log_{\mathbb{R}})
    \begin{pmatrix}
        A & -B \\
        B & A
    \end{pmatrix}
    \right)\\
    &= \rank \left( 
    \begin{pmatrix}
        \diag (a) & \Big\vert &  \diag (b)
    \end{pmatrix}
    \begin{pmatrix}
        A & -B \\
        B & A
    \end{pmatrix}
    \right)\\
    &= \rank 
    \begin{pmatrix}
        \diag (a) A + \diag (b) B  & \Big\vert & -\diag (a) B + \diag (b) A
    \end{pmatrix}
    \\
    &= \rank 
    \begin{pmatrix}
        \diag (\overline{z}) M + \diag (z) \overline{M}  & \Big\vert& \mathbf{i} \diag (\overline{z}) M - \mathbf{i}  \diag (z) \overline{M}
    \end{pmatrix}
    \\
    &= \rank
    \begin{pmatrix}
    \text{Re}(\diag(\overline{z})M) & \Big\vert & \text{Im}(\diag(\overline{z})M)
    \end{pmatrix}
    \\
    &= \rank
    \begin{pmatrix}
    \text{Re}(\diag(\overline{z})M) + \mathbf{i} \ \text{Im}(\diag(\overline{z})M) & \Big\vert & \text{Re}(\diag(\overline{z})M) - \mathbf{i} \ \text{Im}(\diag(\overline{z})M)
    \end{pmatrix}
    \\
    &= \rank
    \begin{pmatrix}
        \diag (\overline{z}) M & \Big\vert & \diag (z) \overline{M} 
    \end{pmatrix}
    \\
    &= \rank
    \begin{pmatrix}
        \diag \left(\overline{M} \  \overline{z'} \right) M & \Big\vert & \diag \left(M  z' \right) \overline{M} 
    \end{pmatrix}
\end{align*}
Now, for each pair $q_1,q_2 \in \mathbb{C}^n$, define the matrix $D(q_1,q_2)$ by
\begin{equation*}
    D(q_1,q_2) := 
    \begin{pmatrix}
        \diag \left(\overline{M} q_2 \right) M & \Big\vert & \diag \left(M q_1 \right) \overline{M} 
    \end{pmatrix}
\end{equation*}
By \Cref{lem:technlem},
for any generic $r \in \mathbb{C}^n$ we have
\begin{equation*}
    \rank D (r,\overline{r}) = \max_{q_1,q_2 \in \mathbb{C}^{\rm D}} \rank D(q_1,q_2) .
\end{equation*}
Since $D(p_1,p_2)$ is the Jacobian of the Hadamard map $\CC^n \times \CC^n \to V \times \overline{V} \rightarrow V \ast \overline{V}$ at a point $(p_1,p_2)$,
we have that
\begin{equation*}
    \dim \mathcal{A}(V) = \dim V \ast \overline{V}.
\end{equation*}
As $\mathcal{M}(\overline{V}) = \mathcal{M}(V)$, it follows from \Cref{thm:bernstein} that 
\begin{equation*}
    \dim \mathcal{A}(V) = \dim V \ast \overline{V} = \rank \mathcal{M}_{r_{\mathcal{M}(V)} + r_{\mathcal{M}(\overline{V})} - 1} 
    = \rank \mathcal{M}_{ 2r_{\mathcal{M}(V)} - 1}.
\end{equation*}
The rank formula of the Dilworth truncation of $2r_{\mathcal{M}(V)} - 1$ now gives the result.
\end{proof}

\subsection*{Acknowledgements}

This material is based upon work supported by the National Science Foundation under Grant No.\ DMS-1929284 while the authors were in residence at the Institute for Computational and Experimental Research in Mathematics in Providence, RI, during the \textit{Geometry of Materials, Packings and Rigid Frameworks} semester program. 
The authors would like to thank Meera Sitharam and Louis Theran for their discussions on Dilworth truncations during the program.
D.\,A.\ is a member of GNSAGA (INdAM).
S.\,D.\ was supported by the Heilbronn Institute for Mathematical Research. 
S.\,T.\ was supported by JSPS KAKENHI Grant Number 24K21315 and 24K14835. 
D.\,A.\ is grateful to Sarah Eggleston for initiating the discussion on amoebas of linear spaces.
S.\,T.\ would like to thank Bill Jackson for a stimulating discussion on \Cref{thm:dil1}.

\bibliographystyle{plainurl}
\bibliography{ref}

\begin{thebibliography}{10}

\bibitem{BALLICO}
Edoardo Ballico.
\newblock Projective surfaces not as {H}adamard products and the dimensions of the {H}adamard joins.
\newblock {\em Journal of Pure and Applied Algebra}, 229(1):107853, 2025.
\newblock \href {https://doi.org/10.1016/j.jpaa.2024.107853} {\path{doi:10.1016/j.jpaa.2024.107853}}.

\bibitem{bernstein22}
Daniel~Irving Bernstein.
\newblock Generic symmetry-forced infinitesimal rigidity: Translations and rotations.
\newblock {\em SIAM Journal on Applied Algebra and Geometry}, 6(2):190--215, 2022.
\newblock \href {https://doi.org/10.1137/20M1346961} {\path{doi:10.1137/20M1346961}}.

\bibitem{bocci2024hadamard}
Cristiano Bocci and Enrico Carlini.
\newblock {\em Hadamard Products of Projective Varieties}.
\newblock Springer, 2024.
\newblock \href {https://doi.org/10.1007/978-3-031-54263-3} {\path{doi:10.1007/978-3-031-54263-3}}.

\bibitem{BCK17}
Cristiano Bocci, Enrico Carlini, and Joe Kileel.
\newblock Hadamard products of linear spaces.
\newblock {\em Journal of Algebra}, 448:595--617, 2016.
\newblock \href {https://doi.org/10.1016/j.jalgebra.2015.10.008} {\path{doi:10.1016/j.jalgebra.2015.10.008}}.

\bibitem{capco}
Jose Capco, Matteo Gallet, Georg Grasegger, Christoph Koutschan, Niels Lubbes, and Josef Schicho.
\newblock The number of realizations of a {L}aman graph.
\newblock {\em SIAM Journal on Applied Algebra and Geometry}, 2(1):94--125, 2018.
\newblock \href {https://doi.org/10.1137/17M1118312} {\path{doi:10.1137/17M1118312}}.

\bibitem{COULLARD1991223}
Collette~R. Coullard, John~G. {del Greco}, and Donald~K. Wagner.
\newblock Representations of bicircular matroids.
\newblock {\em Discrete Applied Mathematics}, 32(3):223--240, 1991.
\newblock \href {https://doi.org/10.1016/0166-218X(91)90001-D} {\path{doi:10.1016/0166-218X(91)90001-D}}.

\bibitem{CMS10:GeometryRBM}
Mar{\'\i}a~Ang{\'e}lica Cueto, Jason Morton, and Bernd Sturmfels.
\newblock Geometry of the restricted {B}oltzmann machine.
\newblock {\em Algebraic Methods in Statistics and Probability}, 516:135--153, 2010.
\newblock \href {https://doi.org/10.1090/conm/516/10172} {\path{doi:10.1090/conm/516/10172}}.

\bibitem{CTY10:Implicitization}
Mar{\'\i}a~Ang{\'e}lica Cueto, Enrique~A. Tobis, and Josephine Yu.
\newblock An implicitization challenge for binary factor analysis.
\newblock {\em Journal of Symbolic Computation}, 45(12):1296--1315, 2010.
\newblock \href {https://doi.org/10.1016/j.jsc.2010.06.011} {\path{doi:10.1016/j.jsc.2010.06.011}}.

\bibitem{dewar2023rigidgraphscylindricalnormed}
Sean Dewar and Derek Kitson.
\newblock Rigid graphs in cylindrical normed spaces.
\newblock {\em SIAM Journal on Discrete Mathematics}, 39(3):1545--1567, 2025.
\newblock \href {https://doi.org/10.1137/23M1587543} {\path{doi:10.1137/23M1587543}}.

\bibitem{amoeba}
Jan Draisma, Sarah Eggleston, Rudi Pendavingh, Johannes Rau, and Chi~Ho Yuen.
\newblock The amoeba dimension of a linear space.
\newblock {\em Proceedings of the American Mathematical Society}, 152(6):2385--2401, 2024.
\newblock \href {https://doi.org/10.1090/proc/16744} {\path{doi:10.1090/proc/16744}}.

\bibitem{dunstan1976matroids}
Frank D.~J. Dunstan.
\newblock Matroids and submodular functions.
\newblock {\em The Quarterly Journal of Mathematics}, 27(3):339--348, 1976.
\newblock \href {https://doi.org/10.1093/qmath/27.3.339} {\path{doi:10.1093/qmath/27.3.339}}.

\bibitem{edmonds1970submodular}
Jack Edmonds.
\newblock Submodular functions, matroids and certain polyhedra.
\newblock In R.~Guy, H.~Hanani, N.~Sauer, and J.~Schönheim, editors, {\em Combinatorial Structures and Their Applications}, pages 69--87. Gordon and Breach, New York, 1970.
\newblock Proceedings of Calgary International Conf.\ (1969).

\bibitem{edmonds1966submodular}
Jack Edmonds and Gian-Carlo Rota.
\newblock Submodular set functions.
\newblock In {\em Proceedings of the Waterloo Combinatorics Conference}, 1966.
\newblock Abstract.

\bibitem{jackson}
Bill Jackson and John~C. Owen.
\newblock A characterisation of the generic rigidity of 2-dimensional point–line frameworks.
\newblock {\em Journal of Combinatorial Theory, Series B}, 119:96--121, 2016.
\newblock \href {https://doi.org/10.1016/j.jctb.2015.12.007} {\path{doi:10.1016/j.jctb.2015.12.007}}.

\bibitem{katoh2009infinitesimal}
Naoki Katoh and Shin-ichi Tanigawa.
\newblock On the infinitesimal rigidity of bar-and-slider frameworks.
\newblock In Yingfei Dong, Ding~Zhu Du, and Oscar Ibarra, editors, {\em Proceedings of the 20th International Symposium on Algorithms and Computation (ISAAC 2009)}, volume 5878 of {\em Lecture Notes in Computer Science}, pages 524--533, Honolulu, Hawaii, USA, 2009.
\newblock \href {https://doi.org/10.1007/978-3-642-10631-6_54} {\path{doi:10.1007/978-3-642-10631-6_54}}.

\bibitem{KITSON2020}
Derek Kitson and Rupert~H. Levene.
\newblock Graph rigidity for unitarily invariant matrix norms.
\newblock {\em Journal of Mathematical Analysis and Applications}, 491(2):124353, 2020.
\newblock \href {https://doi.org/10.1016/j.jmaa.2020.124353} {\path{doi:10.1016/j.jmaa.2020.124353}}.

\bibitem{Laman1970}
Gerard Laman.
\newblock On graphs and rigidity of plane skeletal structures.
\newblock {\em Journal of Engineering Mathematics}, 4:331--340, 1970.
\newblock \href {https://doi.org/10.1007/BF01534980} {\path{doi:10.1007/BF01534980}}.

\bibitem{lovasz1977flats}
L{\'a}szl{\'o'} Lov{\'a}sz.
\newblock Flats in matroids and geometric graphs.
\newblock In P.~J. Cameron, editor, {\em Surveys in Combinatorics}, pages 363--374. Academic Press, London, 1977.

\bibitem{lovasz82}
L{\'a}szl{\'o'} Lov{\'a}sz and Yechiam Yemini.
\newblock On generic rigidity in the plane.
\newblock {\em SIAM Journal on Algebraic Discrete Methods}, 3(1):91--98, 1982.
\newblock \href {https://doi.org/10.1137/0603009} {\path{doi:10.1137/0603009}}.

\bibitem{Mon16}
Guido Mont{\'u}far.
\newblock Restricted {B}oltzmann machines: {I}ntroduction and review.
\newblock In {\em Information Geometry and its Applications IV}, pages 75--115. Springer, 2016.
\newblock \href {https://doi.org/10.1007/978-3-319-97798-0_4} {\path{doi:10.1007/978-3-319-97798-0_4}}.

\bibitem{MM17:DimensionKronecker}
Guido Mont{\'u}far and Jason Morton.
\newblock Dimension of marginals of {K}ronecker product models.
\newblock {\em SIAM Journal on Applied Algebra and Geometry}, 1(1):126--151, 2017.
\newblock \href {https://doi.org/10.1137/16M1077489} {\path{doi:10.1137/16M1077489}}.

\bibitem{oxley}
James Oxley.
\newblock {\em Matroid Theory}.
\newblock Oxford University Press, 2011.
\newblock \href {https://doi.org/10.1093/acprof:oso/9780198566946.001.0001} {\path{doi:10.1093/acprof:oso/9780198566946.001.0001}}.

\bibitem{pollaczek-geiringer1927}
Hilda Pollaczek-Geiringer.
\newblock {Über die Gliederung ebener Fachwerke}.
\newblock {\em ZAMM - Journal of Applied Mathematics and Mechanics / Zeitschrift für Angewandte Mathematik und Mechanik}, 7(1):58--72, 1927.
\newblock \href {https://doi.org/10.1002/zamm.19270070107} {\path{doi:10.1002/zamm.19270070107}}.

\bibitem{Raz2019}
Orit~E. Raz and Avi Wigderson.
\newblock {\em Subspace Arrangements, Graph Rigidity and Derandomization Through Submodular Optimization}, pages 377--415.
\newblock Springer Berlin Heidelberg, Berlin, Heidelberg, 2019.
\newblock \href {https://doi.org/10.1007/978-3-662-59204-5_12} {\path{doi:10.1007/978-3-662-59204-5_12}}.

\bibitem{rosen2020algebraic}
Zvi Rosen, Jessica Sidman, and Louis Theran.
\newblock Algebraic matroids in action.
\newblock {\em The American Mathematical Monthly}, 127(3):199--216, 2020.
\newblock \href {https://doi.org/10.1080/00029890.2020.1689781} {\path{doi:10.1080/00029890.2020.1689781}}.

\bibitem{rosen2025linearizing}
Zvi Rosen, Jessica Sidman, and Louis Theran.
\newblock Linearizing algebraic matroids.
\newblock {\em arXiv preprint}, 2025.
\newblock \href {https://arxiv.org/abs/2507.07220} {\path{arXiv:2507.07220}}.

\bibitem{schrijver2003combinatorial}
Alexander Schrijver.
\newblock {\em Combinatorial Optimization: Polyhedra and Efficiency}, volume~24 of {\em Algorithms and Combinatorics}.
\newblock Springer, 2003.
\newblock URL: \url{https://link.springer.com/book/9783540443896}.

\bibitem{tangiawa}
Shin-ichi Tanigawa.
\newblock Matroids of gain graphs in applied discrete geometry.
\newblock {\em Transactions of the American Mathematical Society}, 367(12):8597--8641, 2015.
\newblock \href {https://doi.org/10.1090/tran/6401} {\path{doi:10.1090/tran/6401}}.

\bibitem{whiteley}
Walter Whiteley.
\newblock A matroid on hypergraphs, with applications in scene analysis and geometry.
\newblock {\em Discrete {\&} Computational Geometry}, 4(1):75--95, 1989.
\newblock \href {https://doi.org/10.1007/BF02187716} {\path{doi:10.1007/BF02187716}}.

\bibitem{whiteley1996some}
Walter Whiteley.
\newblock Some matroids from discrete applied geometry.
\newblock In James~E. Bonin, Joseph~G. Oxley, and Bruce Servatius, editors, {\em Matroid Theory (Seattle, WA, 1995)}, volume 197 of {\em Contemporary Mathematics}, pages 171--312. American Mathematical Society, Providence, RI, 1996.
\newblock \href {https://doi.org/10.1090/conm/197/02540} {\path{doi:10.1090/conm/197/02540}}.

\end{thebibliography}

\appendix

\section{Proof of \texorpdfstring{\Cref{thm:dil1}}{geometric Dilworth truncation}}\label{appendix}
\newcommand{\MA}{{\mathcal A}}
\newcommand{\MI}{{\mathcal I}}
\newcommand{\MF}{{\mathcal F}}
\newcommand{\MG}{{\mathcal G}}
\newcommand{\MC}{{\mathcal C}}
\newcommand{\MB}{{\mathcal B}}
\newcommand{\MH}{{\mathcal H}}
\newcommand{\MW}{{\mathcal W}}
\newcommand{\MX}{{\mathcal X}}
\newcommand{\MY}{{\mathcal Y}}
\newcommand{\D}{{\mathcal D}}
\newcommand{\MZ}{{\mathcal Z}}
\newcommand{\ML}{{\mathcal L}}
\newcommand{\MU}{{\mathcal U}}
\newcommand{\MM}{{\mathcal M}}

To prove \Cref{thm:dil1}, we instead prove a slight strengthening of the result.
We first require the following definition.

\begin{definition}
Let $\mathbb{F}\in \{\mathbb{R},\mathbb{C}\}$.
Let $E$ be a finite set and $\MA=\{A_e:e\in E\}$ be a family of subspaces of $\mathbb{F}^n$.  
We say that a codimension one linear subspace $H$ of $\mathbb{F}^n$ is {\em regular} with respect to ${\cal A}$ if 
\begin{itemize}
\item $H$ intersects ${\cal A}$ transversally, i.e., $\dim A_e\cap H=\dim A_e-1$ for all $e\in E$, and
\item for each $F\subseteq E$, $\dim \langle A_e\cap H': e\in F\rangle$ is maximized by $H'=H$ over all 
codimension one linear subspace $H'$ intersecting ${\cal A}$ transversally.
\end{itemize}
\end{definition}

Note that $H$ is regular if $H$ is chosen as follows.
We first choose  bases $B_e$ of $A_e$ for $e\in E$
and then choose $H$ such that the set of coordinates of a normal vector to $H$ are algebraically independent over the extension field obtained by adding the coordinates of the vectors in $\bigcup_{e\in E} B_e$ to $\QQ$.
Alternatively, any generic codimension one subspace $\mathbb{F}^n$ is regular.
Because of this, \Cref{thm:dil1} follows from the following result. From now on, given a family ${\cal A}=\{A_e: e\in E\}$ and $F\subseteq E$,
we denote ${\cal A}_F=\langle A_e: e\in F \rangle$ and $\langle {\cal A} \rangle = {\cal A}_E$.

\begin{theorem}\label{thm:dilregular}
Let $\mathbb{F}\in \{\mathbb{R}, \mathbb{C} \}$.
Suppose $\MA= \{A_e:e\in E\}$ is a finite family of non-trivial subspaces of $\FF^n$,  and $H$ is a codimension one linear subspace which is regular  with respect to $\MA$. 
Then
\begin{equation*}
\dim 
\langle \MA\rangle=\min\left\{\sum_{i=1}^t (\dim\langle A_e : e \in F_i \rangle-1):\{F_1,\ldots,F_t\} \mbox{ is a partition of }E\right\}.
\end{equation*}
\end{theorem}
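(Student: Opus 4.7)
The plan is to split into the $\leq$ and $\geq$ directions. The $\leq$ bound is essentially formal; the $\geq$ bound is the substantive step and I would prove it by induction.

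For the $\leq$ direction, set $V_F := \langle A_e\cap H : e\in F\rangle$. For any partition $\{F_1,\dots,F_t\}$ of $E$ we have $V_E = \sum_i V_{F_i}$, so $\dim V_E \leq \sum_i \dim V_{F_i}$, and transversality gives $\dim V_{F_i}\leq \dim(\MA_{F_i}\cap H) = \dim \MA_{F_i}-1$. Summing and taking a minimum over partitions yields the desired upper bound. This half uses only transversality of $H$, not the full regularity.

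For the $\geq$ direction, I would actually prove by induction on $|F|$ the stronger statement that $\dim V_F$ equals the minimum of $\sum_{i=1}^t(\dim \MA_{F_i}-1)$ over partitions $\{F_1,\dots,F_t\}$ of $F$, for every $F\subseteq E$. The base case $|F|=1$ is immediate from transversality. The key preliminary observation is that regularity descends to subfamilies: if $H$ is regular for $\MA$, then $H$ is also regular for each $\MA_{E'}=\{A_e:e\in E'\}$. This holds because the set of codimension-one subspaces transversal to $\MA$ is Zariski-open and dense inside the set of those transversal only to $\MA_{E'}$, and since $H'\mapsto\dim V_F(H')$ is lower semi-continuous with its maximum attained on an open dense set, these two maxima coincide. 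With this in place, the inductive step splits into two cases: either (\textbf{A}) some non-trivial partition $\{F_1,F_2\}$ of $F$ satisfies $\dim V_F = \dim V_{F_1}+\dim V_{F_2}$, in which case applying the inductive hypothesis to each $F_i$ (with regularity inherited) and concatenating the resulting optimal partitions realizes $\dim V_F$; or (\textbf{B}) no such partition exists, in which case the claim is that $V_F = \MA_F\cap H$, so that the trivial partition $\{F\}$ alone contributes $\dim\MA_F-1 = \dim V_F$.

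The main obstacle is proving the claim in Case B. Arguing by contradiction, I would assume $V_F\subsetneq \MA_F\cap H$ and pick $v\in(\MA_F\cap H)\setminus V_F$ with some representation $v = \sum_{e\in F}a_e$, $a_e\in A_e$. Writing $H=\ker\alpha$ for some $\alpha\in(\FF^n)^*$, the scalars $\alpha(a_e)\in\FF$ satisfy $\sum_e\alpha(a_e)=0$ but cannot all vanish (else $v\in V_F$). The aim is to use this data to produce either (i) a non-trivial partition $\{F_1,F_2\}$ of $F$ with $V_F = V_{F_1}\oplus V_{F_2}$, contradicting the hypothesis of Case B, or (ii) a perturbed linear functional $\alpha+t\beta$, for a suitable $\beta$, whose kernel $H_t$ is transversal to $\MA$ for small $t\neq 0$ and satisfies $\dim V_F(H_t) > \dim V_F(H)$, contradicting the regularity of $H$. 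The delicate point is choosing $\beta$ so that the perturbed intersections $A_e\cap H_t$ together span an extra direction compensating for the defect vector $v$ without sacrificing transversality; handling this carefully is the heart of Lov\'asz's original argument and is precisely where the full regularity assumption is used in an essential way.
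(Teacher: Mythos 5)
Your upper bound and your outer reduction are fine, and they mirror the paper's own assembly: the paper also reduces to what you call Case~B by choosing a partition $\{F_1,\dots,F_t\}$ of $E$ with $\langle \MB\rangle=\bigoplus_i \MB_{F_i}$ and $t$ maximal, so that each subfamily $\{A_e\cap H:e\in F_i\}$ is connected. (You also correctly read the left-hand side as $\dim\langle A_e\cap H:e\in E\rangle$, which is what the paper's proof actually establishes, and your remark that regularity passes to subfamilies, via lower semicontinuity of $H'\mapsto\dim\langle A_e\cap H':e\in F\rangle$ and density of the transversal hyperplanes, is a legitimate point that the paper leaves implicit.)

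However, there is a genuine gap: Case~B is the entire substance of the theorem, and you do not prove it---you only sketch a contradiction strategy and explicitly defer ``the heart of Lov\'asz's original argument.'' Exhibiting a defect vector $v\in(\MA_F\cap H)\setminus V_F$ with scalars $\alpha(a_e)$ summing to zero does not by itself produce either a direct-sum splitting of $V_F$ or an admissible perturbation of $H$ that raises $\dim V_F$; constructing that perturbation and checking it remains transversal while strictly increasing the span is precisely the missing step. The paper supplies it in \Cref{thm:connected}, proved by induction on the size of the connected family: in \Cref{clm:generic} one takes bases $X_e$ of the slices $B_e=A_e\cap H$, uses connectivity and circuit elimination to find a circuit $C$ meeting two distinct $X_{e_1},X_{e_2}$ in elements $x_1,x_2$, picks $y_i\in A_{e_i}\setminus B_{e_i}$ and a basis $Z\supseteq C\setminus\{x_2\}$ of $\langle\MB\rangle$, and perturbs $H$ to a hyperplane $H_\varepsilon$ containing $(Z\setminus\{x_1\})\cup\{(1-\varepsilon)x_1+\varepsilon y_1,\ (1-\varepsilon)x_2+\varepsilon y_2\}$; for small generic $\varepsilon$ this $H_\varepsilon$ is still transversal to $\MA$ yet its slices span dimension at least $\dim\langle\MB\rangle+1$, contradicting regularity unless either $\dim\langle\MB\rangle=\dim\langle\MA\rangle-1$ already or some $A_{e_2}\subseteq\langle\MB\cup\{A_{e_1}\}\rangle$. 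In the latter case the pair is merged into $A_{e_1}+A_{e_2}$ and the induction closes. Without an argument at this level of detail (or some substitute for it), your proposal establishes only the inequality $\leq$ and the formal reduction, not the theorem.
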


To prove \Cref{thm:dilregular}, we first solve the following special case. We say that a family ${\cal A} = \{A_e : e \in E\}$ is {\em connected} if $\MA_{F'} \cap  \MA_{F''} \neq \{0\}$ 
for all partitions $\{F',F''\}$ of $E$. 

\begin{theorem}\label{thm:connected}
Given $\mathbb{F}\in \{\mathbb{R}, \mathbb{C}\}$, let $\MA= \{A_e:e\in E\}$ be a finite family of non-trivial subspaces of $\FF^n$,  and $H$ be a regular codimension one subspace with respect to $\MA$. 
Suppose $\{A_e\cap H:e\in E\}$ is connected. Then 
$$\dim \langle A_e\cap H: e\in E\rangle=\dim \langle {\cal A}\rangle-1.$$
\end{theorem}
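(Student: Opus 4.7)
The plan is to combine an easy transversality upper bound with an equivalence-relation argument on $E$ that connectedness will force to be trivial. Set $V:=\langle\mathcal{A}\rangle$ and $W:=\langle A_e\cap H:e\in E\rangle$, and more generally $W_F:=\langle A_e\cap H:e\in F\rangle$ for $F\subseteq E$. Transversality of $H$ to each $A_e$ implies $V\not\subseteq H$, so $\dim(V\cap H)=\dim V-1$, and the inclusion $W\subseteq V\cap H$ immediately yields the upper bound $\dim W\le\dim V-1$. For the matching lower bound, I would fix a linear functional $\phi$ with $H=\ker\phi$ and, for each $e\in E$, choose $y_e\in A_e$ with $\phi(y_e)=1$; then $V=W+\langle y_e:e\in E\rangle$, and the quotient $V/W$ is spanned by the images $\bar y_e$, each satisfying $\bar\phi(\bar y_e)=1$. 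In particular, any two distinct $\bar y_e$ are automatically linearly independent, so defining $e\sim e'$ iff $y_e-y_{e'}\in W$, with equivalence classes $F_1,\dots,F_k$ and representatives $\bar y^{(1)},\dots,\bar y^{(k)}$, the target equality $\dim W=\dim V-1$ becomes equivalent to $k=1$.

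Assume $k\ge 2$ for contradiction. The first substep is pairwise disjointness: $\mathcal{A}_{F_i}\cap\mathcal{A}_{F_j}\subseteq H$ for all $i\ne j$. Indeed, a vector $v$ in this intersection with $\phi(v)\ne 0$ can be decomposed both through $F_i$ and through $F_j$; since every $y_f$ with $f\in F_m$ projects to the common class image $\bar y^{(m)}$ in $V/W$, both decompositions compute $\bar v=\phi(v)\bar y^{(i)}$ and $\bar v=\phi(v)\bar y^{(j)}$, forcing $\bar y^{(i)}=\bar y^{(j)}$ and contradicting $F_i\ne F_j$.

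The main obstacle is then to close the argument by contradicting the connectedness hypothesis. The plan is to proceed by induction on $|E|$: applied to the sub-family $\{A_e:e\in F_j\}$, which inherits the regular codimension-one subspace $H$, the inductive hypothesis would give $\dim W_{F_j}=\dim\mathcal{A}_{F_j}-1$ whenever $\{A_e\cap H:e\in F_j\}$ is itself connected. Connectedness of $\{A_e\cap H:e\in E\}$ applied to the partition $\{F_1,E\setminus F_1\}$ produces a nonzero vector in $W_{F_1}\cap W_{E\setminus F_1}\subseteq \mathcal{A}_{F_1}\cap \mathcal{A}_{E\setminus F_1}\cap H$, and a Grassmann dimension count on $W=W_{F_1}+W_{E\setminus F_1}$, together with the pairwise disjointness controlling $\mathcal{A}_{F_1}\cap\mathcal{A}_{E\setminus F_1}$, should force $\dim W\ge\dim V-1$ and collide with the previously established $\dim V/W\ge k\ge 2$. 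The genuinely delicate point, which I expect to be the heart of the proof, is that connectedness of the full family $\{A_e\cap H\}$ does not restrict to connectedness within each $F_j$, so the induction has to be set up to handle connected components of the restricted families on their own terms; this is precisely where the regularity hypothesis on $H$ enters, guaranteeing that each span $\langle A_e\cap H:e\in F\rangle$ realises its maximal possible dimension so that the inductive step can be bootstrapped cleanly.
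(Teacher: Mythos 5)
Your upper bound and the quotient set-up are correct (the functional $\phi$, the classes $F_1,\dots,F_k$, the observation that the conclusion is equivalent to $k=1$, and the pairwise statement $\mathcal{A}_{F_i}\cap\mathcal{A}_{F_j}\subseteq H$ all check out), but everything you actually prove uses only the transversality half of regularity, never the maximality half — and that is where the entire content of the theorem lies. The final step, as sketched, does not work. Concrete obstruction: in $\mathbb{F}^4$ take two $3$-dimensional subspaces $A_1,A_2$ with $\dim(A_1\cap A_2)=2$, and a hyperplane $H$ containing $A_1\cap A_2$ with $H\neq A_1,A_2$. Then $H$ is transversal to $\{A_1,A_2\}$, $A_i\cap H=A_1\cap A_2$ for both $i$, so the family $\{A_1\cap H,A_2\cap H\}$ is connected, your disjointness statement holds (indeed $A_1\cap A_2\subseteq H$ by construction), the ``inductive hypotheses'' $\dim(A_i\cap H)=\dim A_i-1$ hold for the singleton classes, and the partition $\{F_1,E\setminus F_1\}$ supplies a nonzero common vector — yet $\dim\langle A_1\cap H,A_2\cap H\rangle=2=\dim\langle\mathcal{A}\rangle-2$, with $k=2$. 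So ``Grassmann count $+$ connectedness $+$ pairwise disjointness'' cannot force $\dim W\geq\dim V-1$: what is needed is precisely that for a \emph{regular} $H$ the intersection $W_{F_1}\cap W_{E\setminus F_1}$ has dimension at most $\dim(\mathcal{A}_{F_1}\cap\mathcal{A}_{E\setminus F_1})-1$, which is essentially the statement being proved. Since a non-regular transversal $H$ satisfies every step you wrote down, the assertion that ``regularity guarantees the inductive step can be bootstrapped cleanly'' is not an argument but a placeholder for the missing (and hardest) idea.

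For comparison, the paper closes exactly this gap with a perturbation argument: assuming $A_{e_2}\not\subseteq\langle\{A_e\cap H:e\in E\}\cup\{A_{e_1}\}\rangle$ for every pair $e_1\neq e_2$, connectedness produces a circuit meeting two distinct members $B_{e_1},B_{e_2}$, from which one builds a perturbed hyperplane $H_\varepsilon$ (still transversal for small $\varepsilon$) with $\dim\langle A_e\cap H_\varepsilon:e\in E\rangle>\dim\langle A_e\cap H:e\in E\rangle$, contradicting the maximality clause in the definition of regularity. Having secured some pair with $A_{e_2}\subseteq\langle\{A_e\cap H:e\in E\}\cup\{A_{e_1}\}\rangle$, the induction is then run not on your equivalence classes — which, as you yourself note, need not yield connected restricted families — but by replacing $A_{e_1},A_{e_2}$ with the single subspace $A_{e_1}+A_{e_2}$, which reduces $|E|$ while preserving connectedness and regularity. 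Both of the open problems you flag (where maximality of $H$ enters, and how to keep connectedness available in the induction) are resolved by these two devices, and neither appears in your proposal, so the proof is genuinely incomplete rather than merely unpolished.
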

\begin{proof}
We proceed by induction on $|E|$.
The case when $|E|=1$ follows from the first property of the regularity of $H$, so we may assume that $|E|\geq 2$.
Let $B_e=A_e\cap H$ for each $e\in E$ and set $\MB=\{B_e: e\in E\}$. 
Since $H$ is a codimension one subspace intersecting $A_e$ transversally, for each $e\in E$,
\begin{equation}\label{eq:0n}
\dim  \langle \MB \cup \{A_e\}\rangle =\dim \langle \MB\rangle+1.
\end{equation}

\begin{claim}\label{clm:generic}
$A_{e_2}\subseteq \langle \MB\cup\{ A_{e_1}\}\rangle$ holds for some distinct $e_1, e_2\in E$.
\end{claim}
\begin{proof}
Suppose, for a contradiction, that 
\begin{equation}\label{eq:1n}
\text{
 $A_{e_2}\not \subseteq \langle \MB\cup\{ A_{e_1}\}\rangle$
 for all distinct $e_1, e_2\in E$.}
\end{equation}
 Choose a basis $X_e$ of $B_e$ for each $e\in E$.
Since $\MB$ is connected, there is 
a circuit (i.e.~a minimally linearly dependent set) 
$C \subseteq \bigcup_{e\in E} X_e$ such that $C\cap X_{e_1}\neq \emptyset$ and 
 $C\cap X_{e_2}\neq \emptyset$ for two distinct $e_1, e_2\in E$.
Indeed, pick any partition $\{E_1, E_2\}$ of $E$.
The connectivity of ${\cal B}$ implies that 
\begin{equation*}
    \left\langle \bigcup_{e\in E_1} X_e \right\rangle\cap  \left\langle \bigcup_{e\in E_2} X_e\right\rangle\neq \{0\}.
\end{equation*}
Pick any nonzero vector $x$ in this intersection,
and let $C_i$ be a circuit in $\{x\}\cup \bigcup_{e\in E_i} X_e$  for each $i=1,2$.
Then, the circuit elimination axiom implies that 
$(C_1\cup C_2)\setminus \{x\}$ contains a circuit $C$, satisfying the desired property.

 Choose $x_i\in C\cap X_{e_i}$
 and $y_i\in A_{e_i}\setminus B_{e_i}$ for each $i=1,2$.
 Let $Z$ be a basis of $\langle\MB \rangle$
with $C\setminus \{x_2\}\subseteq Z\subseteq \bigcup_{e\in E}X_e$.
 By 
(\ref{eq:1n}), $y_2\notin \langle Z\cup\{y_1\}\rangle$.
 Also, $y_1\notin \langle Z\rangle$ since $y_1\notin H$ and $Z\subset H$.
 Hence, 
 \[
 \dim\langle Z\cup\{y_1,y_2\}\rangle =\dim\langle Z\rangle +2 = |Z| +2.
 \]
 In addition,
 since $C$ is the unique circuit in $Z\cup\{y_1,y_2,x_2\}$  and $x_1\in C$, we have
 $x_2\notin \langle (Z\setminus\{x_1\})\cup\{y_1,y_2\}\rangle$.
 Thus $(Z\setminus\{x_1\})\cup\{y_1,y_2\}$ satisfies:
 \[
  x_1, x_2\notin \langle (Z\setminus\{x_1\})\cup\{y_1,y_2\}\rangle \quad \text{ and } \quad
   \dim\langle (Z\setminus\{x_1\})\cup\{y_1,y_2\}\rangle =\dim\langle Z\rangle+1=\dim \langle \MB \rangle +1.
  \]

By (\ref{eq:0n}), we have  $\dim \langle \MB \rangle \leq \dim \langle \MA \rangle-1$. If equality holds then we are done for \Cref{thm:connected}, so we may assume that $\dim \langle \MB \rangle\leq \dim \MA-2$.
We now consider perturbing $H$ such that the perturbed codimension one subspace $H_{\varepsilon}$ contains 
 $(Z\setminus \{x_1\})\cup \{
 (1-\varepsilon)x_1+\varepsilon y_1,  (1-\varepsilon)x_2+\varepsilon y_2\}$ for $\varepsilon \in \mathbb{R}$.
Since $Z\subset \bigcup_{e\in E}A_e$ and $(1-\varepsilon)x_i+\varepsilon y_i\in A_{e_i}$ for $1\leq i\leq 2$, we have
$$(Z\setminus \{x_1\})\cup \{
 (1-\varepsilon)x_1+\varepsilon y_1,  (1-\varepsilon)x_2+\varepsilon y_2\}
\subset \langle A_e\cap H_{\varepsilon}: e\in E\rangle \qquad \text{ for all } \epsilon \in \mathbb{R}.$$
Since $Z\setminus\{x_1\}\cup\{y_1,y_2\}$ is linearly independent, 
$(Z\setminus \{x_1\})\cup \{
 (1-\varepsilon)x_1+\varepsilon y_1,  (1-\varepsilon)x_2+\varepsilon y_2\}$ is linearly independent 
 for almost all values of $\varepsilon$.
Hence, for almost all values of $\varepsilon$, we have
\begin{equation}\label{eq:2n}
\dim \langle A_e\cap H_{\varepsilon}: e\in E\rangle\geq \dim \langle\MB\rangle+1>\dim \langle A_e\cap H: e\in E\rangle.
\end{equation}
 In addition,  $H_{\varepsilon}$ intersects $\MA$ transversally when $\varepsilon$ is sufficiently small.
 However, (\ref{eq:2n}) 
 contradicts the second property of the regularity of $H$.
\end{proof}

By \Cref{clm:generic}, we can choose distinct $e_1, e_2\in E$ with 
$A_{e_2}\subseteq \langle\MB\cup \{A_{e_1}\}\rangle$.
Let $A_{e'}=A_{e_1}+A_{e_2}$ for a new index $e'$,
and let ${\cal A}'=\{A_e:e\in E\setminus \{e_1,e_2\}\}\cup \{A_{e'}\}$.
Since $A_{e'}$ is not contained in $H$,
$H$ intersects ${\cal A}'$ transversally.
Also, the second property of the regularity of $H$ with respect to $\MA$ implies that with respect to $\MA'$.
So, $H$ is regular with respect to $\MA'$.
The connectivity of $\MA$ implies the connectivity of $\MA'$.
Thus, we can apply induction to $\MA'$ to obtain
\begin{equation*}
    \dim \langle B_e: e\in E\setminus \{e_1,e_2\}\cup \{e'\}\rangle=\dim \langle \MA'\rangle-1=\dim \langle \MA\rangle-1,    
\end{equation*}
where $B_{e'}=A_{e'}\cap H$.
Also, 
since $A_{e_2}\subseteq \langle \MB\cup\{A_{e_1}\}\rangle$,
we have $A_{e'} \subseteq \langle \MB\cup\{A_{e_1}\}\rangle$.
This now implies that $B_{e'}  \subseteq \langle \MB\rangle$,
and so $\langle B_e: e\in E\setminus \{e_1,e_2\}\cup \{e'\}\rangle=\langle B_e: e\in E\rangle$.
Thus, we obtain 
$\dim \langle B_e: e\in E\rangle=\dim \langle \MA\rangle -1$.
This completes the proof.
\end{proof}

\begin{proof}[Proof of \Cref{thm:dilregular}] 
Denote $B_e=A_e\cap H$ and $\MB=\{B_e: e\in E\}$.
Pick any  partition $\{F_1,\ldots,F_t\}$ of $E$.
Then  
\begin{equation}\label{eq:dil1}
\dim \langle \MB\rangle 
\leq \sum_{i=1}^t \dim \MB_{F_i}
\leq \sum_{i=1}^t (\dim \MA_{F_i}-1),
\end{equation}
where the last equality follows from the transversality of $H$.

We complete the proof by showing that equality in (\ref{eq:dil1}) does hold  for some partition of $E$. We ensure that equality holds in the first inequality in (\ref{eq:dil1}) by choosing a partition $\{F_1,\ldots,F_t\}$ of $E$ such that $\langle \MB\rangle=\bigoplus_{i=1}^t \MB_{F_i}$ and $t$ is as large as possible. Note that $t$ exists since this equation holds for the trivial partition $\{E\}$. The condition that $t$ is as large as possible ensures that the family of subspaces $\MB_{F_i}$ is connected for all $1\leq i\leq t$.
We can now apply \Cref{thm:connected} to each $\MA_{F_i}$ to deduce that equality holds in the second inequality of 
 (\ref{eq:dil1}). 
\end{proof}

\end{document}